\newcommand{\vertiii}[1]{{\left\vert\kern-0.25ex\left\vert\kern-0.25ex\left\vert #1
    \right\vert\kern-0.25ex\right\vert\kern-0.25ex\right\vert}}
\theoremstyle{plain}
\begin{document}
\title[]{On the mean square of the remainder for the Euclidean lattice point counting problem}
\author{Dubi Kelmer}
\thanks{The author is partially supported by NSF grant DMS-1401747.}
\email{kelmer@bc.edu}
\address{Department of Mathematics, Maloney Hall
Boston College, Chestnut Hill, MA}

\subjclass{}%
\keywords{}%

\date{\today}%
\dedicatory{}%
\commby{}%

\begin{abstract}
We give mean square bounds for the remainder in the lattice point counting problem, counting the number of lattice points in a large ball in $\R^d$, when averaged over families of shears of the lattice.
\end{abstract}

 \maketitle
\section{Introduction} 
For $\Lambda\subseteq \R^d$ a lattice and $B_T\subset\R^d$,  the ball of radius $T$ centered at the origin, the counting function
$$\cN_{B_T}(\Lambda)=\#\Lambda\cap B_T,$$
grows asymptotically like  $\frac{\vol(B_T)}{\vol(\R^n/\Lambda)}$ and we are interested in studying the asymptotics of the remainder
$$\cR_{B_T}(\Lambda):=\cN_{B_T}(\Lambda)-\frac{\vol(B_T)}{\vol(\R^n/\Lambda)}.$$

A classical result of Landau states that $\cR_{B_T}(\Lambda)=O_\Lambda(T^{d-2+\frac{2}{d+1}})$, where we use notation $A=O(B)$ to mean that $|A|\leq cB$ for some constant $c>0$, and we use subscript to indicate the dependence of the constant on parameters. 
In high dimensions, $d\geq 5$, it is conjectured that $\cR_{B_T}(\Lambda)=O_\Lambda(T^{d-2})$ (which is known for rational lattices \cite{Walfisz1924,Landau1924}, and in general when $d\geq 9$ \cite{BentkusGotze1997}).  In lower dimensions, the conjectured bound is  $\cR_{B_T}(\Lambda)=O_{\Lambda,\epsilon}(T^{2+\epsilon})$ for $d=4$ (which is known rational lattices \cite{Landau1924}); $\cR_{B_T}(\Lambda)=O_{\Lambda,\epsilon}(T^{1+\epsilon})$ for $d=3$ (where the best known bound is $O_{\Lambda,\epsilon}(T^{21/16+\epsilon})$ for a rational lattice  \cite{HeathBrown1999,ChamizoCristobalUbis2009} and $O_{\Lambda,\epsilon}(T^{63/43+\epsilon})$ in general  \cite{Muller1999}); and $\cR_{B_T}(\Lambda)=O(T^{1/2+\epsilon})$ for $d=2$ (where the best known bound is $O_{\Lambda,\epsilon}(T^{131/208+\epsilon})$ \cite{Huxley2003}).

The conjectured bounds described above for $\cR_{B_T}(\Lambda)$ are known to be sharp when $\Lambda=\Z^d$, however, when $d\geq 4$ we expect $\cR_{B_T}(\Lambda)$ to be generically much smaller. For example, a classical result of Schmidt \cite{Schmidt1960} implies that $\cR_{B_{T}}(\Lambda)=O_{\Lambda}({T}^{d/2}\log(T))$ for almost all lattices. Schmidt's result holds for any family of ordered (with respect to inclusion) sets in $\R^d$ with volume going to infinity (not just growing balls). When dealing with growing balls we expect the remainder to be even smaller, and  a conjecture of G{\"o}tze \cite{Gotze1998} states that for a generic lattice $\Lambda$, the bound for the remainder should be of order $O(T^{\frac{d-1}{2}+\epsilon})$.

In this paper we give mean square bounds for the remainder, averaged over a family of shears of a lattice that are compatible with  G{\"o}tze's conjecture. Explicitly, any lattice $\Lambda\subset \R^d$ is of the form $\Lambda=\Z^dg$ with $g\in \GL_d(\R)$, and shears of this lattice are given by $\Z^dug$ with $u\in U_d(\Z)\bk U_d(\R)$ where $U_d<\GL_d$ denotes the subgroup of unipotent upper triangular matrices. Note that replacing $g$ by $\gamma g$ with $\g\in \SL_d(\Z)$ does not change $\Lambda$ but it does change the family of shears. Our main result is a mean square bound over any such family of shears.

\begin{thm}\label{t:main}
For any $g\in \GL_d(\R)$ there is a constant $c=c(g)$ such that for all $T\geq 1$
$$\int_{U_d(\Z)\bk U_d(\R)}|\cR_{B_T}(\Z^dug)|^2du\leq cT^{d-1}\log^2(T),$$
where $du$ is Lebesgue measure on $U_d(\R)\cong \R^{\tfrac{d(d-1)}{2}}$.
\end{thm}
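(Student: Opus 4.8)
The plan is to pass to the dual lattice by Poisson summation and then carry out the $U_d(\Z)$-average by exploiting the triangular structure of the unipotent group, stratifying the resulting dual sum according to the orbits of the lower unipotent group on $\Z^d$. The arithmetic input (Bessel/stationary phase for the sphere, and curved Fourier estimates for its planar slices) will be packaged through a Parseval identity along the ``top row'' of shear variables, and the logarithmic loss will come from an off-diagonal divisor sum.

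First I would reduce to a smooth count. Since $\cN_{B_T}(\Z^dug)=\sum_{n\in\Z^d}\mathbf 1_{E_T}(nu)$ with $E_T:=B_Tg^{-1}$ an ellipsoid of radii $\asymp_g T$, I sandwich $\mathbf 1_{B_T}$ between radial smooth functions $\psi_T^{\pm}$ agreeing with $\mathbf 1_{B_T}$ outside an annulus of width $\delta$, with $\|\partial^\alpha\psi_T^{\pm}\|_\infty\ll_\alpha\delta^{-|\alpha|}$. Monotonicity of the count in the test function and $\int(\psi_T^+-\psi_T^-)\ll\delta T^{d-1}$ give, pointwise in $u$, $|\cR_{B_T}(\Z^dug)|\le|\cR_{\psi_T^+}(\Z^dug)|+|\cR_{\psi_T^-}(\Z^dug)|+O_g(\delta T^{d-1})$, so with $\delta=T^{-(d-1)/2}$ the error contributes $O_g(T^{d-1})$ after integrating, and it remains to bound $\int_{U_d(\Z)\bk U_d(\R)}|\cR_{\psi_T^{\pm}}(\Z^dug)|^2\,du$. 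For smooth $\psi$, Poisson summation on $\Z^d$ yields $\cR_{\psi}(\Z^dug)=\sum_{m\in\Z^d\setminus\{0\}}\Phi(mu^{-\mathsf{T}})$, where $\Phi$ is the Fourier transform of the smoothed indicator of $E_T$ and $u^{-\mathsf{T}}$ is the inverse transpose; the Bessel estimate for the sphere gives $|\Phi(\xi)|\ll_{g,N}T^{(d-1)/2}|\xi|^{-(d+1)/2}(1+\delta|\xi|)^{-N}$ for $|\xi|\ge 1/T$, and $\Phi(\xi)\ll_g T^d$ in general. Substituting $v=u^{-\mathsf{T}}$ (measure preserving, identifying $U_d(\Z)\bk U_d(\R)$ with $\overline{U}_d(\Z)\bk\overline{U}_d(\R)$, lower unipotent matrices) and using that $\sum_{m\ne0}\Phi(mv)$ is invariant under $v\mapsto\gamma v$, $\gamma\in\overline{U}_d(\Z)$, the problem becomes to bound $\int_{\overline{U}_d(\Z)\bk\overline{U}_d(\R)}\big|\sum_{m\ne0}\Phi(mv)\big|^2\,dv$.

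Next I would stratify: if $m$ has last nonzero coordinate $m_k$, write $m=(\vec m,m_k,0,\dots,0)$ with $\vec m\in\Z^{k-1}$, so that $mv=(\vec mL_0+m_kw,\,m_k,0,\dots,0)$, where $L_0$ is the top-left $(k-1)\times(k-1)$ block of $v$ and $w=(v_{k1},\dots,v_{k,k-1})$ is its $k$-th row. Writing $\cR_\psi=\sum_{k=1}^d\cR_k$ with $\cR_k$ the sum over $m$ at level $k$, the term $\cR_1$ is constant and $O_g(T^{(d-1)/2})$, and $\cR_k$ depends only on the entries $v_{ij}$ with $i\le k$; by Cauchy--Schwarz it suffices to bound each $\|\cR_k\|_2^2$. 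The key structural fact is that $w$ translates the first $k-1$ coordinates of $mv$ (scaled by $m_k$): for $c=m_k\ne0$ the slice $\Psi_c(w):=\sum_{\vec m}\Phi(\vec mL_0+cw,c,0,\dots,0)$ is $\tfrac1c\Lambda_0$-periodic ($\Lambda_0:=\Z^{k-1}L_0$ unimodular), so Poisson in $\vec m$ gives $\Psi_c(w)=\sum_{\mu\in\Lambda_0^{*}}\widehat{\psi_{E_T}^{(\mu)}}(c,0,\dots,0)\,e^{2\pi ic\langle\mu,w\rangle}$, where $\psi_{E_T}^{(\mu)}$ is the $(d-k+1)$-dimensional slice of the smoothed ellipsoid at first $k-1$ coordinates $=\mu$. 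Integrating $|\cR_k|^2=|\sum_c\Psi_c|^2$ over $w\in[0,1)^{k-1}$ and then over $L_0$, the diagonal $c=c'$, $\mu=\mu'$ produces $\sum_{c\ne0}\sum_{\mu}|\widehat{\psi_{E_T}^{(\mu)}}(c,0,\dots,0)|^2$; since $\psi_{E_T}^{(\mu)}$ is a smoothing of the indicator of a curvature-controlled convex body of size $\asymp(T^2-|\mu|^2)_+^{1/2}$, the curved Fourier estimate together with a Riemann-sum comparison over the unimodular lattice $\Lambda_0^{*}\subset\R^{k-1}$ gives $\sum_\mu|\widehat{\psi_{E_T}^{(\mu)}}(c,0,\dots,0)|^2\ll_g T^{d-1}|c|^{-(d-k+2)}(1+\delta|c|)^{-2N}$, and since $d-k+2\ge2$ the $c$-sum is $O_g(T^{d-1})$.

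The hard part will be the off-diagonal contributions: the non-orthogonality corrections ($c=c'$, $\mu\ne\mu'$), which are harmless because the relevant $\mu-\mu'$ are forced to be short vectors of $\Lambda_0^{*}$ and that lattice is unimodular with bounded reduction data (so only finitely many $c$ are affected and each costs $O_g(T^{d-1})$), and, above all, the cross terms with $c\ne c'$. The $w$-integral of $e^{2\pi i\langle c\mu-c'\mu',w\rangle}$ over the cube is $\ll\prod_j\min(1,|c\mu_j-c'\mu'_j|^{-1})$, which is non-negligible only when $c\mu\approx c'\mu'$; writing $c=ea$, $c'=eb$ with $(a,b)=1$, these resonances lie along $\mu=b\nu$, $\mu'=a\nu$ with $\nu$ ranging over the dual lattice, and the near-resonant partners are $O(1)$ in number by the uniform geometry. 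Estimating the sums over $e,a,b,\nu$ --- keeping the slice-size factors $(T^2-b^2|\nu|^2)_+^{1/2}$, truncating at $|c|\asymp\delta^{-1}=T^{(d-1)/2}$ via the decay $(1+\delta|c|)^{-N}$, and applying Cauchy--Schwarz in the divisor variable so that a bound of the shape $T^{d-1}\sum_{n\le T^{(d-1)/2}}d(n)/n\ll T^{d-1}\log^2(T)$ appears --- is where the factor $\log^2(T)$ is produced. The two technical points that must be monitored throughout are uniformity in $v$ of all constants (secured by the bounded reduction data of the $\Lambda_0$) and the degeneration of curvature of the slices near $|\mu|\approx T$ (handled by the trivial volume bound over a thin shell). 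Summing over the finitely many levels $k$ and the two smoothings, and adding back the $O_g(T^{d-1})$ smoothing error, yields the claimed bound $cT^{d-1}\log^2(T)$.
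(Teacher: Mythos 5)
Your approach is genuinely different from the paper's. The paper never applies Poisson summation on $\Z^d$; instead it peels off one coordinate at a time, counting lattice points on each one-dimensional fiber exactly and expressing the error as a sum of sawtooth functions (Lemma \ref{p:reduction} and Proposition \ref{p:inductive}). The shear average then kills the sawtooth contributions via orthogonality of $\cos(2\pi m\, n\cdot x)$ in Propositions \ref{p:meansquare}--\ref{p:meansquare2}, and the $\log^2$ loss arises from the Cauchy--Schwarz treatment of the harmonic tail $\sum_m 1/m$. Your route dualizes this: a global Poisson summation, a stratification of the dual lattice by last nonzero coordinate, and a Parseval identity in the new ``bottom row'' $w$. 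What the paper buys by its method is that the proof of Theorem~\ref{t:main} needs only the elementary estimate \eqref{e:cIbound} for a scalar oscillatory integral, no Bessel asymptotics, no curved Fourier estimates for ellipsoidal slices, and no smoothing/un-smoothing argument. What your method would buy, if carried through, is a more direct connection to stationary-phase bounds for the sphere and possibly a route to sharper constants, but at the price of a number of nontrivial analytic lemmas that are stated rather than proved.

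That said, there are a few genuine gaps in the present form of the argument. First, after fixing $L_0$, the remaining $w$-variable is not parametrized by $[0,1)^{k-1}$: the $U_{k}(\Z)$-action on the $k$-th row is $w\mapsto w+nL_0$, so $w$ lives in $\R^{k-1}/\Lambda_0$. Integrating over that torus gives \emph{exact} orthogonality $\int e^{2\pi i\langle c\mu-c'\mu',w\rangle}\,dw=\delta_{c\mu=c'\mu'}$ (since $c\mu,c'\mu'\in\Lambda_0^{*}$), so the ``near-resonance'' heuristic with $\prod_j\min(1,|c\mu_j-c'\mu_j'|^{-1})$ does not apply; the off-diagonal terms are supported exactly on the divisor locus $c=ea$, $c'=eb$, $\mu=b\nu$, $\mu'=a\nu$. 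This actually simplifies matters, but it means the asserted source of the $\log^2(T)$ is not correct as stated; you need to re-derive the loss (or show that in fact no log is needed) from the exact resonance sum. Second, and more seriously, the central estimate $\sum_\mu|\widehat{\psi_{E_T}^{(\mu)}}(c,0,\dots,0)|^2\ll T^{d-1}|c|^{-(d-k+2)}(1+\delta|c|)^{-2N}$ is not correct with the uniform cutoff $(1+\delta|c|)^{-N}$: a slice of the $\delta$-smoothed ellipsoid at height $\mu$ with radius $R(\mu)=(T^2-|\mu|^2)^{1/2}$ is smoothed at scale roughly $\delta T/R(\mu)$, not $\delta$, so the cutoff and the curved-decay regime both degenerate as $|\mu|\to T$. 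The boundary shell $T-|\mu|\lesssim\delta$ cannot be absorbed by a ``trivial volume bound'' without a computation, because its contribution is precisely of the critical size; and the intermediate range where $R(\mu)^{-1}\lesssim|c|\lesssim R(\mu)/(\delta T)$ must be estimated separately. Note that the paper confronts the analogous issue with Lemma~\ref{l:Poisson1} (smoothed lattice sum with error $O(T^{(d+k-1)/2})$), which is where Bessel asymptotics do enter --- your argument will require an $L^2$ analogue of this, and that is exactly where the work lies. Until these two points are settled, the proposal establishes the shape of the argument but not the claimed bound.
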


\begin{rem}\label{r:sharp}
This generalizes the result of \cite{Kelmer15}, who proved it for the case of $d=2$.
In that case, it was also shown in \cite{Kelmer15} that this bound is essentially sharp (up to the logarithmic term) by showing that, for $d=2$, there is a sequence $T_k\to\infty$ with $\int_{U_d(\Z)\bk U_d(\R)}\cR_{B_{T_k}}(\Z^du)du\gg T_k^{\frac{d-1}{2}}$.
We can show this also holds for $d=3,5$ (and with some more work also for other small odd $d$'s). While we believe that such a lower bound holds in any dimension we were not able to prove it in this generality; see Remarks \ref{r:Sharp1} and \ref{r:Sharp2}.
\end{rem}

We expect similar mean square bounds to hold when averaging over a much smaller family of shears. As explained above, for $d\leq 3$ it is expected that such bounds to hold point-wise. For higher dimensions some averaging is necessary, and it is an interesting questions to understand what is the smallest family for which such bounds hold. To make this more precise, consider the subgroups $U_{d,l}\leq U_d$ of upper triangular matrices having zero above the diagonal in the first leftmost $l$ columns (so that $U_{d,1}=U_{d}$ is the full group). It seems reasonable that averaging over $U_{d,d-1}$ should be enough. While such a result is currently out of our reach, when $d\geq 4$ we can prove the following refinement of Theorem \ref{t:main} 
\begin{thm}\label{t:refine}
Let $d\geq 4$. For any $g\in \GL_d(\R)$ there is a constant $c=c(g)$ such that for $T\geq 1$
$$\int_{U_{d,l}(\Z)\bk U_{d,l}(\R)}|\cR_{B_T}(\Z^dug)|^2du\leq cT^{d-1}\log^2(T),$$
for any $\ell\leq [\frac d 2]$, where $[\cdot]$ denotes the integer part.
\end{thm}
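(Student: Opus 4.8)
The plan is to follow the Fourier-analytic approach behind Theorem~\ref{t:main}, isolating the one place where the hypothesis $\ell\le[d/2]$ enters. For $u\in U_{d,\ell}(\R)$ write $A_u:=(ug)^{-t}$, so that the lattice dual to $\Z^dug$ is $\Z^dA_u$. Since $u\mapsto\Z^dug$ descends to a continuous map of the \emph{compact} quotient $U_{d,\ell}(\Z)\bk U_{d,\ell}(\R)$ into the space of lattices of covolume $|\det g|^{-1}$, the $\Z^dA_u$ stay in a fixed compact set; in particular $|mA_u|\gg_g 1$ uniformly in $u$ for every $0\ne m\in\Z^d$. Replacing the indicator of $B_T$ by smooth minorant and majorant at a suitable scale $\delta=\delta(T)$ and applying Poisson summation (the smoothing errors being controlled, after averaging over $u$, exactly as for Theorem~\ref{t:main}), one is reduced to bounding
\begin{equation*}
\int_{U_{d,\ell}(\Z)\bk U_{d,\ell}(\R)}\Bigl|\sum_{0\ne m\in\Z^d}b_T(mA_u)\Bigr|^2\,du=\sum_{m_1,m_2\in\Z^d\setminus\{0\}}\int b_T(m_1A_u)\,\overline{b_T(m_2A_u)}\,du,
\end{equation*}
where $b_T(\xi)=\widehat{\mathbf 1}_{B_T}(\xi)\,\widehat\psi(\delta\xi)$, with $\widehat{\mathbf 1}_{B_T}$ expressed through the Bessel function $J_{d/2}$, so that $b_T(\xi)=O\bigl(T^{(d-1)/2}|\xi|^{-(d+1)/2}\bigr)$ with leading oscillatory factor $e(\pm T|\xi|)$ (here $e(x)=e^{2\pi i x}$), and $b_T(\xi)$ is negligible for $|\xi|$ beyond a fixed power of $T$.

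The new ingredient is the following observation. For $u\in U_{d,\ell}$ the first $\ell$ rows of $u^{-t}$ are $e_1,\dots,e_\ell$, so if $m=(m',0)$ with $m'\in\Z^\ell$ then $mA_u=(m',0)g^{-t}$ is \emph{independent of $u$}. Accordingly split $\Z^d\setminus\{0\}=\mathcal P_0\sqcup\mathcal P_1$, with $\mathcal P_0=\{(m',0):0\ne m'\in\Z^\ell\}$ and $\mathcal P_1$ the set of $m$ whose last $d-\ell$ coordinates are not all zero, and write the Fourier sum as $\mathcal R^{(0)}+\mathcal R^{(1)}(u)$ with $\mathcal R^{(0)}=\sum_{\mathcal P_0}b_T((m',0)g^{-t})$ constant in $u$. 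The vectors $(m',0)g^{-t}$, $m'\in\Z^\ell$, span a full rank-$\ell$ lattice, so $|(m',0)g^{-t}|\asymp_g|m'|$, and therefore
\begin{equation*}
|\mathcal R^{(0)}|\ll T^{(d-1)/2}\sum_{0\ne m'\in\Z^\ell}|m'|^{-(d+1)/2}\ll_g T^{(d-1)/2},
\end{equation*}
where the series converges \emph{precisely because $\tfrac{d+1}{2}>\ell$, i.e.\ because $\ell\le[d/2]$}. Hence $\int|\mathcal R^{(0)}|^2\,du\ll_g T^{d-1}$, and, granting the bound for $\mathcal R^{(1)}$ below, $\int \mathcal R^{(0)}\overline{\mathcal R^{(1)}}\,du=O(T^{d-1}\log T)$ by Cauchy--Schwarz.

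It remains to bound $\int_{U_{d,\ell}(\Z)\bk U_{d,\ell}(\R)}|\mathcal R^{(1)}(u)|^2\,du=\sum_{m_1,m_2\in\mathcal P_1}\int b_T(m_1A_u)\overline{b_T(m_2A_u)}\,du$, and for this one runs the argument used to prove Theorem~\ref{t:main}. The diagonal terms $m_1=\pm m_2$ contribute $\ll T^{d-1}\sum_{m\in\mathcal P_1}\int|mA_u|^{-(d+1)}\,du\ll T^{d-1}$, since $\sum_{0\ne v\in\Z^dA_u}|v|^{-(d+1)}$ is bounded uniformly in $u$. For the off-diagonal terms $m_1\ne\pm m_2$ one estimates $\int e\bigl(\pm T(|m_1A_u|\pm|m_2A_u|)\bigr)\Phi(u)\,du$ by repeated integration by parts in the $U_{d,\ell}$-variables: the essential input is that every $m\in\mathcal P_1$ is genuinely moved by the $U_{d,\ell}$-action — it has a nonzero coordinate among the last $d-\ell$ — so the gradient of the phase is large off a thin locus of resonant pairs, and the resonant pairs are estimated directly by a geometry-of-numbers count; this, together with the truncation of the $|v|$-sum, produces the factor $\log^2 T$. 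I expect this last step, the off-diagonal estimate over the \emph{smaller} group $U_{d,\ell}$, to be the main obstacle: one must check that once the $u$-independent frequencies $\mathcal P_0$ are removed the resonant pairs among $\mathcal P_1$ are sparse enough that their total contribution stays $O(T^{d-1}\log^2 T)$. In fact I expect this part to go through for every $\ell\le d-1$, so that the restriction $\ell\le[d/2]$ is forced only by the crude absolute-value bound on $\mathcal R^{(0)}$; replacing that bound by one exploiting the oscillation of $b_T$ on $\mathcal P_0$ is what one would need in order to reach $U_{d,d-1}$.
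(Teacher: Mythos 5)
Your approach is genuinely different from the paper's, and you have correctly located where the hypothesis $\ell\leq[d/2]$ is forced: in your framework it is the convergence of $\sum_{0\neq m'\in\Z^\ell}|m'|^{-(d+1)/2}$, and in the paper it is the requirement $k=d-\ell>\ell-1$ needed to apply the Poisson-summation estimate of Lemma~\ref{l:Poisson1} to the $\ell$-dimensional lattice $\Z^\ell g^{(\ell)}$. These are two views of the same phenomenon: the ``main'' term that is not moved by $U_{d,\ell}$ must be controlled by a pointwise (not averaged) bound, and that bound only becomes good enough when the untouched sublattice has dimension $\leq[d/2]$.

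However, your proposal is built on a misreading of how Theorem~\ref{t:main} is proved. The paper does not smooth the indicator of $B_T$ and apply $d$-dimensional Poisson summation followed by stationary phase. Instead it uses the exact inductive slicing formula of Proposition~\ref{p:inductive}, which expresses $\cN_T(g)$ as a smoothed lower-dimensional count plus oscillatory sums $H_T^{(j)}$, and then bounds the mean square of each $H_T^{(j)}$ over the relevant torus by elementary orthogonality of characters (Propositions~\ref{p:meansquare} and~\ref{p:meansquare2}), with no need for any off-diagonal stationary-phase analysis. Theorem~\ref{t:refine} is then almost immediate: apply the inductive formula to depth $k=d-\ell$, observe that each $H_T^{(j)}$ for $j\leq d-\ell-1$ depends on a variable $x^{(d-j-1)}$ with $d-j-1\geq\ell$, i.e.\ one of the coordinates actually swept out by $U_{d,\ell}$, while the remaining $\ell$-dimensional term depends on no $U_{d,\ell}$-variable and is handled pointwise by Lemma~\ref{l:Poisson1}.

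Because of this, the step in your argument where you say ``one runs the argument used to prove Theorem~\ref{t:main}'' for $\int|\mathcal R^{(1)}(u)|^2\,du$ has no referent and is a genuine gap. You acknowledge this yourself: the off-diagonal estimate for $m_1\neq\pm m_2$ over the smaller group $U_{d,\ell}$ — integration by parts in the unipotent variables plus a geometry-of-numbers count of resonant pairs yielding $O(T^{d-1}\log^2T)$ — is precisely the hard technical content, and you do not carry it out. Unlike the paper's orthogonality argument, this is not a routine adaptation; whether the resonant-pair count closes, and with what loss, would need to be checked in detail. Until that is supplied, the proposal establishes only the easier half (the bound on $\mathcal R^{(0)}$, which indeed pins down $\ell\leq[d/2]$) and not the theorem.
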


It is interesting to compare Theorems \ref{t:main} and \ref{t:refine} to other results regarding the mean square of the remainder, when averaged over different families.
One classical example is the result of Kendall \cite{Kendall1948}, who showed that the mean square of the remainder, when averaging over translates of a ball, $B_T(\alpha)$ when varying the center $\alpha\in \R^d/\Z^d$. For this family he showed that the mean square  $\int_{(\R/\Z)^d}|\cR_{B_T(\alpha)}(\Z^d)|^2 d\alpha$,
is bounded by $O(T^{\frac{d-1}{2}})$, and that this bound is sharp.

Another type of mean square averages is over sets of deformation in the full space of lattices. 
For example, Hofmann Iosevich and Weidinger \cite{HofmannIosevichWeidinger2004} considered averaging over the set of lattices of the form $\{\Z^da_\lambda| \lambda\in[1,2]^d\}$ with $a_\lambda$ a diagonal  matrix with $\lambda_1,\ldots,\lambda_d$ on the diagonal. For this family they showed that in dimensions $d=2,3$  
$$\int_{[1,2]^d}|\cR_{B_T}(\Z^d a_\lambda)|^2 d\lambda=O_\epsilon(T^{d-1+\epsilon}),$$ (see also \cite{PetridisToth2002} for results on similar sets of deformations). 
Their result was extended by Holmin  \cite{Holmin13},  who showed that for any compact set $\cC$ in the full space of lattices $\GL_d(\Z)\bk \GL_d(\R)$, again with $d=2,3$, the mean square
$\int_{\cC}|\cR_{B_T}(\Lambda)|^2d\tilde\mu(\Lambda)$ is bounded by $O_\cC(T^{d-1}\log^m(T)),$
for some $m\in \N$, with $\tilde\mu$ induced from Haar measure on $GL_d(\R)$.

It was noted in \cite{HofmannIosevichWeidinger2004}  that using the results of \cite{IosevichSawyerSeeger2002}, bounding the mean square 
$$\frac{1}{T}\int_{T}^{2T}|\cR_{B_T}(\Lambda)|^2dt=O_{\Lambda,\epsilon}(T^{2d-4+\epsilon}),$$ 
one can deduce that $\int_{[1,2]^d}|\cR_{B_T}(\Z^d a_\lambda)|^2 d\lambda$ is bounded by $O_\epsilon(T^{2d-4+\epsilon})$ for any $d\geq 3$. While the results of \cite{IosevichSawyerSeeger2002} are essentially optimal, it is likely that when averaging over the larger set $\{\Z^da_\lambda| \lambda\in[1,2]^d\}$ the correct bound should be of order $O_{\epsilon}(T^{d-1+\epsilon})$, however, to the best of our knowledge such result is currently not known in any dimension $d\geq 4$.
Nevertheless, as a direct consequence of Theorem \ref{t:main} we can get the following result, generalizing the result of \cite{Holmin13} to any dimension.
\begin{cor}\label{c:main}
For any compact set $\cC$ in the space of (unimodular) lattices $X_d=\SL_d(\Z)\bk\SL_d(\R)$ there is a constant $c=c(\cC)$ such that for all $T\geq 1$ 
$$\int_{\cC}|\cR_{B_T}(\Lambda)|^2d\mu(\Lambda)\leq c T^{d-1}\log^2(T)$$
where $\mu$ denotes the probability measure on $X_d$ induced from Haar measure on $\SL_d(\R)$
\end{cor}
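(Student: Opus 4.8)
The plan is to deduce Corollary~\ref{c:main} from Theorem~\ref{t:main} by writing a compact piece of $X_d$ as a union of the closed orbits $\{\Z^d u g : u\in U_d(\Z)\bk U_d(\R)\}$ of the full unipotent subgroup and integrating the estimate of Theorem~\ref{t:main} over the transverse directions.

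First I would transfer the integral to $\SL_d(\R)$. The function $g\mapsto|\cR_{B_T}(\Z^d g)|^2$ is left $\SL_d(\Z)$-invariant, and, $\cC$ being compact, one can pick a compact set $\widetilde\cC\subset\SL_d(\R)$ with $\pi(\widetilde\cC)\supseteq\cC$ for the projection $\pi\colon\SL_d(\R)\to X_d$; then $\sum_{\gamma\in\SL_d(\Z)}\mathbf{1}_{\widetilde\cC}(\gamma g)\geq 1$ whenever $\pi(g)\in\cC$, so unfolding yields
$$\int_\cC|\cR_{B_T}(\Lambda)|^2\,d\mu(\Lambda)\;\leq\;c_0\int_{\SL_d(\R)}\mathbf{1}_{\widetilde\cC}(g)\,|\cR_{B_T}(\Z^d g)|^2\,dg,$$
with $c_0$ an absolute constant fixed by the normalization of Haar measure $dg$.

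Next I would apply the Iwasawa decomposition $g=nak$, $n\in U_d(\R)$, $a$ in the group $A$ of positive diagonal matrices of determinant one, $k\in K=SO(d)$, under which $dg=j(a)\,dn\,da\,dk$ for a smooth positive function $j$. Since $B_T$ is rotation invariant, $\cR_{B_T}(\Z^d n a k)=\cR_{B_T}\!\big((\Z^d na)k\big)=\cR_{B_T}(\Z^d na)$, so the variable $k$ disappears from the integrand. Because the Iwasawa map is a diffeomorphism of $U_d(\R)\times A\times K$ onto $\SL_d(\R)$, the support condition on $\mathbf{1}_{\widetilde\cC}$ forces $n$ into a compact $N_0\subset U_d(\R)$ and $a$ into a compact $A_0\subset A$, and $j$ is bounded on $A_0$. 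Covering $N_0$ by finitely many translates $\xi_1F,\dots,\xi_MF$ of a bounded fundamental domain $F$ for $U_d(\Z)\bk U_d(\R)$ with $\xi_i\in U_d(\Z)$ and $M=M(\cC)$, and using $\Z^d\xi_i n a=\Z^d n a$, one obtains for each fixed $a\in A_0$
$$\int_{N_0}|\cR_{B_T}(\Z^d na)|^2\,dn\;\leq\;M\int_{U_d(\Z)\bk U_d(\R)}|\cR_{B_T}(\Z^d u a)|^2\,du\;\leq\;M\,c(a)\,T^{d-1}\log^2(T)$$
by Theorem~\ref{t:main} with $a$ playing the role of $g$. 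Integrating this over $a\in A_0$ and trivially over $k\in K$ produces the desired bound $c(\cC)T^{d-1}\log^2(T)$, the constant $c(\cC)$ being the product of $c_0$, $\vol(K)$, $\vol(A_0)$, $M$, $\sup_{A_0}j$ and $\sup_{a\in A_0}c(a)$.

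The only genuinely substantive point is that $\sup_{a\in A_0}c(a)<\infty$: Theorem~\ref{t:main} asserts finiteness of $c(g)$ only for each individual $g$, so one must verify that the constant furnished by its proof can be taken locally bounded in $g$. I expect this to be immediate from the proof, since the constant should depend on $g$ only through tame continuous quantities such as $\|g\|$, $\|g^{-1}\|$ and $\vol(\R^d/\Z^d g)$, which are bounded over the compact set $A_0$; this is the step I would flag as requiring a look back at the argument. With this in hand the chain of inequalities above gives Corollary~\ref{c:main}.
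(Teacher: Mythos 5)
Your argument is correct and follows the same route as the paper: pass to the Iwasawa coordinates $g=uak$, use compactness to confine $a$ to a compact set $\cC_A$ and $u$ to a bounded set which can be folded back to $U_d(\Z)\bk U_d(\R)$, and then integrate the bound of Theorem~\ref{t:main} over $\cC_A\times K$ against the (continuous, hence bounded) Jacobian $\Delta(a)$. The point you flag is the only one requiring care, and it does go through: inspecting the proof of Theorem~\ref{t:main}, the constant $c(g)$ is built from $\lambda_1,\dots,\lambda_d$, the coefficients $c_j(g)$ from Proposition~\ref{p:inductive}, and the implied constant in $\cN_T(g^{(k)})=O_g(T^k)$, all of which depend continuously on the class of $g$ in $U_d(\R)\bk\GL_d(\R)/\SO(d)$, i.e.\ on $a$ alone, so $\sup_{a\in\cC_A}c(a)<\infty$.
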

\begin{rem}
The assumption that $\cC$ is compact is necessary for this result. First, when $d=2$, the mean square of the remainder over the full space of unimodular lattices diverges so such a result is hopeless. In higher dimensions, the mean square converges and satisfies the bound $\int_{X_d}|\cR_{B_T}(\Lambda)|^2d\mu(\Lambda)=O(T^d)$, which was shown in \cite{Holmin13} to be sharp.
\end{rem}

From the mean square bound over compact sets together with a standard Borel-Cantelli argument, we can deduce the following result, in the spirit of  G{\"o}tze's conjecture
\begin{cor}\label{c:BC}
For any fixed sequence $\{T_k\}_{k\in \N}$ growing exponentially fast (i.e., $T_k\geq q^k$ for some $q>1$) we have that $\cR_{B_{T_k}}(\Lambda)=O_{\Lambda}( T_k^{\tfrac{d-1}{2}}\log^{2}(T_k))$ for $\mu$-a.e. $\Lambda\in X_d$ .
\end{cor}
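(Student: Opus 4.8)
The plan is a standard Borel--Cantelli argument on top of the compact-set mean square bound of Corollary \ref{c:main}. First I would exhaust $X_d$ by an increasing family of compact sets: by Mahler's compactness criterion, $\cC_j:=\{\Lambda\in X_d:\ \Lambda\text{ has no nonzero vector of length}<1/j\}$ is compact and $\bigcup_j\cC_j=X_d$. Since a countable union of $\mu$-null sets is $\mu$-null, it suffices to fix one $\cC=\cC_j$ and show that $\cR_{B_{T_k}}(\Lambda)=O_\Lambda\big(T_k^{(d-1)/2}\log^2 T_k\big)$ holds for $\mu$-almost every $\Lambda\in\cC$.

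For this fixed $\cC$, let $c=c(\cC)$ be the constant from Corollary \ref{c:main}, put $M_k:=T_k^{(d-1)/2}\log^2(T_k)$, and consider the bad sets $E_k:=\{\Lambda\in\cC:\ |\cR_{B_{T_k}}(\Lambda)|>M_k\}$. By Chebyshev's inequality and Corollary \ref{c:main},
$$\mu(E_k)\ \leq\ \frac{1}{M_k^2}\int_\cC|\cR_{B_{T_k}}(\Lambda)|^2\,d\mu(\Lambda)\ \leq\ \frac{c\,T_k^{d-1}\log^2(T_k)}{T_k^{d-1}\log^4(T_k)}\ =\ \frac{c}{\log^2(T_k)}.$$
Because $T_k\geq q^k$ we have $\log(T_k)\geq k\log q$, so $\sum_k\mu(E_k)\leq \tfrac{c}{\log^2 q}\sum_{k\geq 1}k^{-2}<\infty$. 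The Borel--Cantelli lemma then gives $\mu(\limsup_k E_k)=0$: for $\mu$-a.e.\ $\Lambda\in\cC$ there is $k_0(\Lambda)$ with $|\cR_{B_{T_k}}(\Lambda)|\leq M_k$ for every $k\geq k_0$. Absorbing the finitely many remaining terms into the implied constant yields the desired bound on $\cC$, and taking the union over $j$ finishes the proof.

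There is no genuine obstacle here; it is essentially bookkeeping. The only two points that need (entirely routine) care are the compact exhaustion of $X_d$ --- so that the constant $c(\cC)$ of Corollary \ref{c:main} applies on each piece --- and checking that the power of $\log T_k$ built into $M_k$ is large enough for $\sum_k\mu(E_k)$ to converge once combined with the exponential spacing $T_k\geq q^k$. In fact any exponent $>1/2$ on the logarithm would suffice, so the $\log^2$ appearing in the statement is not sharp; it is kept only to match the form of Corollary \ref{c:main}.
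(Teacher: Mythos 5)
Your proof is correct and follows essentially the same Borel--Cantelli argument as the paper; your explicit Mahler-criterion exhaustion of $X_d$ by compact sets merely spells out what the paper summarizes as ``since this is true for any compact set $\cC$.'' One small correction to your closing remark: with $M_k=T_k^{(d-1)/2}\log^\alpha T_k$ one gets $\mu(E_k)\ll \log^{2-2\alpha}(T_k)\ll k^{2-2\alpha}$, which is summable only when $\alpha>3/2$, not $\alpha>1/2$ (the $\log^2$ already present in Corollary~\ref{c:main} costs two of the four logs in $M_k^2$).
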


\subsection*{Acknowledgments}
We thank Andreas Str\"ombergsson for our discussions on this problem and for his valuable comments. We also thank Zeev Rudnick for his comments.

\section{Preliminary estimates}
The proof of Theorem \ref{t:main} relies on an extension of the arguments in \cite{Kelmer15} dealing with the case of $d=2$, combined with an induction on the dimension.
While the arguments are elementary, there are a few estimates that we will need in order to execute them. We start by proving these estimates, after setting up some notations.

\subsection{Notations}
We will denote by $\cN_{T}(g):=\cN_{B_T}(\Z^dg)$ and $\cR_{T}(g):=\cR_{B_T}(\Z^dg)$ and think of these functions as functions on $\GL_d(\R)$ (that are left $\SL_d(\Z)$ and right $\SO(d)$ invariant). We also use the notation $A\ll B$ to mean that there is a constant $c>0$ such that $A\leq c B$ and we use subscripts to indicate the dependence of the constant on parameters. 

\subsection{Oscillatory integrals}
The following oscillatory integrals will play a key role in what follows.
Let $J_\nu(x)$ denote the standard Bessel function with index $\nu$ and define
\begin{equation}\label{e:cJ}
\cJ_{\nu,k}(X)=\int_0^X(X^2-u^2)^{\frac{k-2}{2}}J_{\nu}(2\pi u)u^{\nu+1}du.
\end{equation}
For these integrals we show

\begin{lem}\label{l:cJbound}
For any $\nu>0$ and $k\in \N $ we have 
\begin{equation}\label{e:cJbound}
\cJ_{\nu,k}(X)=O_{\nu,k}( X^{\frac{k+2\nu-1}{2}}).
\end{equation}
\end{lem}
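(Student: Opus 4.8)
The plan is to reduce the bound on $\cJ_{\nu,k}(X)$ to the known asymptotic behavior of the Bessel function $J_\nu(2\pi u)$ on the two relevant ranges of $u$. Recall that $J_\nu(x)=O_\nu(x^\nu)$ as $x\to 0$ (more precisely $J_\nu(x)\sim \frac{(x/2)^\nu}{\Gamma(\nu+1)}$), while $J_\nu(x)=O_\nu(x^{-1/2})$ as $x\to\infty$. After rescaling $u=Xt$ in \eqref{e:cJ}, we get
\begin{equation*}
\cJ_{\nu,k}(X)=X^{k+2\nu}\int_0^1 (1-t^2)^{\frac{k-2}{2}} J_\nu(2\pi Xt)\, t^{\nu+1}\,dt,
\end{equation*}
so it suffices to show the integral on the right is $O_{\nu,k}(X^{-\frac{k+2\nu+1}{2}})$. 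First I would split this integral at $t=1/X$ (equivalently, split the original integral at $u=1$).

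On the small range $0\le t\le 1/X$ I would use $J_\nu(2\pi Xt)\ll_\nu (Xt)^\nu$ and bound $(1-t^2)^{\frac{k-2}{2}}\ll_k 1$, which gives a contribution $\ll_{\nu} X^\nu \int_0^{1/X} t^{2\nu+1}\,dt \ll_\nu X^\nu X^{-(2\nu+2)}=X^{-\nu-2}$, and one checks $X^{-\nu-2}\le X^{-\frac{k+2\nu+1}{2}}$ as long as, say, $k\le 3$; for larger $k$ this crude estimate is already more than enough since the exponent $-\frac{k+2\nu+1}{2}$ is even more negative, so this range is harmless. On the main range $1/X\le t\le 1$ I would use the decay estimate $J_\nu(2\pi Xt)\ll_\nu (Xt)^{-1/2}$, giving a contribution
\begin{equation*}
\ll_\nu X^{-1/2}\int_{1/X}^{1} (1-t^2)^{\frac{k-2}{2}} t^{\nu+\frac12}\,dt.
\end{equation*}
When $k\ge 2$ the factor $(1-t^2)^{\frac{k-2}{2}}$ is bounded, $t^{\nu+1/2}$ is bounded on $[0,1]$, and the integral converges, so the contribution is $\ll_{\nu,k} X^{-1/2}$. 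When $k=1$ the weight $(1-t^2)^{-1/2}$ has an integrable singularity at $t=1$ (since $\int^1 (1-t)^{-1/2}dt<\infty$), so again the integral is $O_{\nu}(1)$ and the contribution is $\ll_\nu X^{-1/2}$. Comparing with the target exponent, $X^{-1/2}\le X^{-\frac{k+2\nu+1}{2}}$ precisely when $\frac{k+2\nu+1}{2}\le \frac12$, i.e.\ $k+2\nu\le 0$, which never happens; so this naive splitting actually loses against the claimed bound, and the crude absolute-value estimate on the main range is too weak.

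This last point is the main obstacle: the stated exponent $\frac{k+2\nu-1}{2}$ is genuinely smaller than what the triangle inequality yields, so one cannot simply bound $|J_\nu|$ pointwise on $[1,X]$; one must exploit the oscillation of $J_\nu(2\pi u)$. The cleanest way I would handle this is to insert the standard large-argument expansion $J_\nu(x)=\sqrt{\tfrac{2}{\pi x}}\bigl(\cos(x-\tfrac{\nu\pi}{2}-\tfrac{\pi}{4})+O_\nu(x^{-1})\bigr)$ into the main-range integral. The error term $O_\nu(x^{-1})$ contributes $\ll_\nu X^{-1}\int_0^1(1-t^2)^{\frac{k-2}{2}}t^{\nu}\,dt\ll_{\nu,k}X^{-1}$, which is now smaller than $X^{-\frac{k+2\nu+1}{2}}$ when $k+2\nu\ge 1$, i.e.\ always (as $\nu>0$). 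For the leading oscillatory term, reverting to the $u$ variable, I would need to bound $\int_1^X (X^2-u^2)^{\frac{k-2}{2}} u^{\nu+\frac12}\cos(2\pi u - c_\nu)\,du$ by $O_{\nu,k}(X^{k-1})$ — indeed one gains a full power of $X$ over the absolute-value bound by a single integration by parts in $u$ (the boundary term at $u=X$ vanishes when $k\ge 3$ and is controlled when $k=1,2$ by the smallness near the turning point, and the derivative of the amplitude $(X^2-u^2)^{\frac{k-2}{2}}u^{\nu+1/2}$ is smaller by a factor $\sim 1/X$ away from $u=X$, with the region $u\in[X-1,X]$ contributing $\ll X^{k/2+\nu}$ directly). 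Assembling the three pieces — the small range, the oscillatory error term, and the integrated-by-parts main term — and multiplying back by the prefactor $X^{k+2\nu}$ (or equivalently keeping track of powers in the $u$ variable throughout) yields $\cJ_{\nu,k}(X)\ll_{\nu,k}X^{\frac{k+2\nu-1}{2}}$, as claimed. I would expect the bookkeeping near the turning point $u=X$ for the small cases $k=1,2$ to be the only delicate part; everything else is routine.
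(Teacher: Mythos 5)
Your approach has a genuine gap that cannot be fixed by more careful bookkeeping, and it shows up precisely in the range of parameters the lemma is invoked with. The central problem is that splitting the integral at $u=1$ (equivalently $t=1/X$) discards cancellation that is essential to the result. On the small range $u\in[0,1]$ the weight $(X^2-u^2)^{\frac{k-2}{2}}$ equals $X^{k-2}(1+O(X^{-2}))$, while $\int_0^1 J_\nu(2\pi u)u^{\nu+1}\,du=\cJ_{\nu,2}(1)=\tfrac{1}{2\pi}J_{\nu+1}(2\pi)$, a nonzero constant for generic $\nu$. Hence the small-range piece is genuinely of size $\Theta_\nu(X^{k-2})$, and since $k-2>\frac{k+2\nu-1}{2}$ as soon as $k>2\nu+3$ — which is exactly the regime needed in Lemma~\ref{l:Poisson1}, where the lemma is applied with $\nu=\tfrac d2-1$ and second argument at least $d+2=2\nu+4$ — the small-range piece alone already exceeds the target. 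The claimed bound can only hold because the range $u\in[1,X]$ supplies a matching term $-\tfrac{1}{2\pi}J_{\nu+1}(2\pi)X^{k-2}$ that cancels it, and no pointwise estimate of the two ranges separately can recover this. (Also, the rescaling prefactor should be $X^{k+\nu}$, not $X^{k+2\nu}$, and the inequality ``$X^{-1}\leq X^{-\frac{k+2\nu+1}{2}}$ when $k+2\nu\geq 1$'' runs the wrong way — it requires $k+2\nu\leq 1$; but these arithmetic slips are minor next to the lost cancellation.)

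Even ignoring the small range, a single integration by parts plus the one-term asymptotic $J_\nu(x)=\sqrt{\tfrac{2}{\pi x}}\cos(\cdots)+O(x^{-3/2})$ is only strong enough for $k\leq 2$: the $O(x^{-3/2})$ remainder already contributes $\Theta(X^{k+\nu-\frac32})$ to $\cJ_{\nu,k}(X)$, which beats $X^{\frac{k+2\nu-1}{2}}$ only when $k\leq 2$, and the main oscillatory term requires roughly $k/2$ integrations by parts to gain the needed $X^{k/2}$ over the triangle-inequality bound. The paper sidesteps all of this with the exact identity $\cJ_{\nu,k}(X)=\frac{2\pi}{k}\cJ_{\nu-1,k+2}(X)$, obtained by one integration by parts using $\frac{d}{dx}\bigl(x^\nu J_\nu(x)\bigr)=x^\nu J_{\nu-1}(x)$ \emph{before} any splitting or any use of asymptotics. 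The target exponent $\frac{k+2\nu-1}{2}$ is invariant under $(\nu,k)\mapsto(\nu+1,k-2)$, so this reduces the general case to $k=2$ (where the same differential identity closes the integral exactly) and $k=1$ (where the asymptotic expansion plus an alternating-series estimate does the job). That exact recursion — an integration by parts with the genuine Bessel function on the full interval, not with its asymptotic expansion on a truncated range — is the idea your proposal is missing; your outline essentially reproduces the $k=1,2$ base cases but has no mechanism to get from there to general $k$.
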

\begin{proof}
We first recall some standard properties of the Bessel function that we will need. First, for large argument $x\geq 1$ we have the asymptotic expansion 
\begin{equation}\label{e:asym}
J_\nu(x)=\sqrt{\frac{2}{\pi x}}\cos(x-\frac{\pi (2\nu+1)}{4})(1+O_\nu(\frac{1}{x})),
\end{equation}
while for $x\to 0$ 
\begin{equation}\label{e:asym2}
\frac{2^\nu}{x^\nu}J_{\nu}(x)\to \frac{1}{\Gamma(\nu+1)}.\end{equation}
Next, from the series expansion of the Bessel function we get the following differential identity 
\begin{equation}\label{e:difJ}
\frac{d}{dx}(x^\nu J_\nu(x))=x^{\nu}J_{\nu-1}(x).
\end{equation}
Using \eqref{e:difJ} and integrating by parts, we get that for any $k\in \N$  
\begin{eqnarray*}
\cJ_{\nu-1,k+2}(X)&=&\int_0^X(X^2-u^2)^{\frac{k}{2}}J_{\nu-1}(2\pi u)u^{\nu}du\\
&=&\frac{1}{2\pi}\int_0^X(X^2-u^2)^{\frac{k}{2}}[J_{\nu}(2\pi u)u^{\nu}]'du\\
&=&\frac{k}{2\pi}\int_0^X(X^2-u^2)^{\frac{k-2}{2}}J_{\nu}(2\pi u)u^{\nu+1}du=\frac{k}{2\pi}\cJ_{\nu,k}(X)
\end{eqnarray*}
Hence, for any $k\in \N$ we have the identity 
\begin{equation}\label{e:cJrec}
\cJ_{\nu,k}(X)=\frac{2\pi}{k}\cJ_{\nu-1,k+2}(X).
\end{equation}
Using \eqref{e:cJrec} it is enough to establish \eqref{e:cJbound} for $k=1,2$, as the general bound will follow by induction.

First, for $k=2$ using \eqref{e:difJ} and the asymptotics $J_\nu(X)\ll X^{-1/2}$ we get
\begin{eqnarray*}
|\cJ_{\nu,2}(X)|&=&|\int_0^XJ_{\nu}(2\pi u)u^{\nu+1}du|\\
&=& |\frac{1}{2\pi}\int_0^X[J_{\nu+1}(2\pi u)u^{\nu+1}]'du|\\
&=& \frac{1}{2\pi}|J_{\nu+1}(2\pi X)|X^{\nu+1} \ll X^{\frac{2\nu+1}{2}}.\\
\end{eqnarray*}
This bound together with \eqref{e:cJrec} implies that $\cJ_{\nu,k}(X)=O_{\nu,k}( X^{\frac{k+2\nu-1}{2}})$ for any $\nu>0$ and even $k\in \N$.

Next, for $k=1$ we have
\begin{eqnarray*}
\cJ_{\nu,1}(X)&=&\int_0^XJ_{\nu}(2\pi u)\frac{u^{\nu+1}}{\sqrt{X^2-r^2}}du.\\
\end{eqnarray*}
Using the asymptotics \eqref{e:asym} for the Bessel function and making the change of variables $u\mapsto X-u$ we get 
\begin{eqnarray*}
\cJ_{\nu,1}(X)&=&\int_0^XJ_{\nu}(2\pi u)\frac{u^{\nu+1}}{\sqrt{X^2-u^2}}du\\
&=&\frac{1}{\pi} \int_0^X\cos(2\pi(u-\frac{(2\nu+1)}{8})\frac{u^{\nu+1/2}}{\sqrt{X^2-u^2}}du+O( \int_0^X\frac{u^{\nu-1/2}}{\sqrt{X^2-u^2}}du)\\
&=&\frac{1}{\pi}\int_0^X\cos(2\pi(\omega- u))\frac{(X-u)^{\nu+1/2}}{\sqrt{u(2X-u)}}du+O(X^{\nu-1/2})\\.
\end{eqnarray*}
with $\omega=X-\tfrac{(2\nu+1)}{8}$. Writing $\cos(2\pi(\omega- u))=\cos(2\pi \omega)\cos(2\pi u)-\sin(2\pi \omega)\sin(2\pi u)$ we see that
\begin{eqnarray*}
\lefteqn{\int_0^X\cos(2\pi(\omega- u))\frac{(X-u)^{\nu+1/2}}{\sqrt{u(2X-u)}}du}\\
&&\leq |\int_0^X \cos (2\pi u) \tfrac{(X-u)^{\nu+1/2}}{\sqrt{u(2X-u)}}du|+|\int_0^X\sin(2\pi u)\tfrac{(X-u)^{\nu+1/2}}
{\sqrt{u(2X-u)}}du|\\
\end{eqnarray*} 
In order to bound the last term, let 
$$a_n=\int_{\frac{n-1}{2}}^{n/2}|\sin(2\pi u)|\tfrac{(X-u)^{\nu+1/2}}
{\sqrt{u(2X-u)}}du,$$  and write the integral
$$\int_0^X\sin(2\pi u)\tfrac{(X-u)^{\nu+1/2}}
{\sqrt{u(2X-u)}}du=\sum_{n=1}^{[2X]} (-1)^n a_n+\int_{[2X]/2}^X\sin(2\pi u)\tfrac{(X-u)^{\nu+1/2} }
{\sqrt{u(2X-u)}}du$$
as an the alternating sum plus a small remainder.
The remainder can be bounded by $\int_{X-1/2}^X\tfrac{(X-u)^{\nu+1/2}}
{\sqrt{u(2X-u)}}du\ll \frac{1}{X}$. Since the function $\frac{(X-u)^{\nu+1/2}}
{\sqrt{u(2X-u)}}$ is positive and monotonously decreasing for $u\in(0,X)$, so is the sequence, $\{a_n\}_{n=1}^{[2X]}$, and
the alternating sum is bounded by its first term
$$a_1\leq \int_0^{1/2}\frac{(X-u)^{\nu+1/2}du}
{\sqrt{u(2X-u)}}\ll X^{\nu}.$$
We thus get that
$|\int_0^X\sin(2\pi u)\frac{(X-u)^{\nu+1/2}du}
{\sqrt{u(2X-u)}}|\ll X^{\nu}$. The same argument also shows that $|\int_0^X\cos(2\pi u)\frac{(X-u)^{\nu+1/2}du}
{\sqrt{u(2X-u)}}|\ll X^{\nu}$, and hence $\cJ_{\nu,1}(X)=O(X^\nu)$. Using this together with \eqref{e:cJrec} implies that $\cJ_{\nu,k}(X)=O_{\nu,k}( X^{\frac{k+2\nu-1}{2}})$ for any $\nu>0$ and odd $k\in \N$, thus concluding the proof.
\end{proof}

Specializing to $\nu=\tfrac{1}{2}$ and recalling that $J_{1/2}(x)=\sqrt{\tfrac{2}{\pi x}}\sin(x)$ we get that the following elementary integral
\begin{equation}\label{e:cI}
\cI_k(X)=\int_0^X (X^2-u^2)^{\frac{k-2}{2}}u\sin(2\pi u)du,
\end{equation}
is a the special case $\cI_k(X)=\pi  \cJ_{1/2,k}(X)$ and in particular we have the bound
\begin{equation}\label{e:cIbound}
\cI_k(X)=O_k( X^{k/2}).
\end{equation}
\begin{rem}
For the proof of Theorem \ref{t:main} we only need to use the oscillatory integral \eqref{e:cI} and the bound \eqref{e:cIbound}, which can be proved by elementary means (without Bessel functions). However, for the proof of Theorem \ref{t:refine} we need the more general bound  \eqref{e:cJbound}.
\end{rem}

 \subsection{Smoothed sums}
With the help of the above oscillatory integral estimates we can give good estimates for smoothed sums.
First for one dimensional sums we show
\begin{lem} \label{l:sum2int}For any $k\in \N$ we have
$$\sum_{|n|<T}(T^2-n^2)^{k/2}=2T^{k+1}\int_0^1(1-t^2)^{k/2}dt+O_k(T^{k/2})$$
\end{lem}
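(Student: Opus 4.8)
The plan is to apply the Poisson summation formula to the function $f(x)=(T^2-x^2)^{k/2}$ for $|x|\le T$ and $f(x)=0$ otherwise. Since the terms with $|n|=T$ contribute nothing (as $k/2>0$) and those with $|n|>T$ vanish, the left-hand side is exactly $\sum_{n\in\Z}f(n)$, and $f$ is continuous and compactly supported. The zeroth Fourier coefficient is
$$\hat f(0)=\int_{-T}^T(T^2-x^2)^{k/2}dx=2T^{k+1}\int_0^1(1-t^2)^{k/2}dt$$
after the substitution $x=Tt$, which is precisely the main term. Hence it remains to prove that $\sum_{m\ne 0}\hat f(m)=O_k(T^{k/2})$.

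For $m\ge 1$, using that $f$ is even (so the imaginary part of the exponential drops out) and substituting $x=u/m$ we get
$$\hat f(m)=\int_{-T}^T(T^2-x^2)^{k/2}\cos(2\pi mx)\,dx=\frac{2}{m^{k+1}}\int_0^{mT}(m^2T^2-u^2)^{k/2}\cos(2\pi u)\,du.$$
I would then integrate by parts with $v=\frac{\sin(2\pi u)}{2\pi}$, differentiating the factor $(m^2T^2-u^2)^{k/2}$: the boundary terms vanish (at $u=mT$ because $k/2>0$, at $u=0$ because $\sin 0=0$), leaving
$$\hat f(m)=\frac{k}{\pi m^{k+1}}\int_0^{mT}(m^2T^2-u^2)^{\frac{k-2}{2}}u\sin(2\pi u)\,du=\frac{k}{\pi m^{k+1}}\,\cI_k(mT).$$
Now \eqref{e:cIbound} gives $\cI_k(mT)=O_k\big((mT)^{k/2}\big)$, so $|\hat f(m)|\ll_k T^{k/2}m^{-(k+2)/2}$; by evenness the same bound holds for $m\le -1$, and since $k\ge 1$ the series $\sum_{m\ne 0}m^{-(k+2)/2}$ converges. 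Therefore $\sum_{m\ne 0}\hat f(m)=O_k(T^{k/2})$.

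Finally I would justify the use of Poisson summation itself: the estimates above show $\sum_{m\in\Z}|\hat f(m)|<\infty$, and $f$ is continuous and compactly supported, so the periodization $\sum_{n\in\Z}f(x+n)$ is continuous and coincides with its (absolutely and uniformly convergent) Fourier series $\sum_{m\in\Z}\hat f(m)e^{2\pi imx}$; evaluating at $x=0$ gives $\sum_{n\in\Z}f(n)=\sum_{m\in\Z}\hat f(m)$. Combining this with the formula for $\hat f(0)$ and the tail bound yields the claim.

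This argument is essentially routine; the only places that call for a little care are the vanishing of the boundary terms in the integration by parts (for odd $k$ the integrand of $\cI_k$ is singular, though integrable, at the upper endpoint, so this should be read as an identity of improper integrals) and verifying the mild hypotheses that make Poisson summation applicable. Neither is a genuine obstacle, and both the oscillatory integral bound and the explicit identity $\cI_k(X)=\pi\cJ_{1/2,k}(X)$ are already available from Lemma \ref{l:cJbound} and \eqref{e:cIbound}.
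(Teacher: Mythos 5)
Your proof is correct, and it takes a genuinely different route from the paper's. The paper proves this lemma via an elementary Euler--Maclaurin-type device: it sets $f(t)=(T^2-t^2)^{k/2}$, introduces $S(T)=\int_0^T f'(t)\,s(t)\,dt$ with $s(t)=\tfrac12-\{t\}$ the sawtooth, computes $S(T)$ once by Abel summation (producing $\sum_{n}f(n)-\int_0^T f$) and once by inserting the Fourier series of $s$ (producing $\sum_m \cI_k(mT)/m^{k+1}$ up to constants), then applies the bound \eqref{e:cIbound}. You instead apply Poisson summation directly to the compactly supported function $f$, so the main term is $\hat f(0)$ and the error is $\sum_{m\neq0}\hat f(m)$, which a single integration by parts converts to the same $\cI_k(mT)/m^{k+1}$ sum. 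The two arguments are mathematically equivalent --- the sawtooth calculation is essentially a hands-on proof of Poisson summation in one dimension --- and both hinge on exactly the same oscillatory bound. Your version has the aesthetic advantage of making the lemma visibly the $d=1$ analogue of Lemma~\ref{l:Poisson1}; note that one cannot literally set $d=1$ in that lemma because it invokes $\cJ_{\nu,k}$ with $\nu=d/2-1$, and Lemma~\ref{l:cJbound} requires $\nu>0$, so your extra integration by parts (which passes from $\cJ_{-1/2,k+2}$ to $\cJ_{1/2,k}=\cI_k/\pi$) is exactly the step that sidesteps this. Your remarks about the applicability of Poisson summation (continuity of $f$, absolute summability of $\hat f(m)$, reading the $k=1$ integration by parts as an improper integral) are the right caveats and are correctly handled.
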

\begin{proof}
Let  $f(t)=(T^2-t^2)^{k/2}$ and consider the integral 
$$S(T)=\int_0^Tf'(t)s(t)dt$$
with $s(t)=1/2-\{t\}$ the odd sawtooth function.

On one hand, using that $f(t)=O(T^{k/2})$ for $[T]\leq t\leq T$, we have
$$S(T)=\sum_{n=1}^{[T]} \int_{0}^1f'(t+n-1)(1/2-t)dt+O(T^{k/2}),$$
and after integrating by parts
\begin{eqnarray*}
S(T)=-\sum_{n=1}^{[T]} f(n)-\frac{f(0)}{2}+\int_0^Tf(t)dt +O(T^{k/2}).
\end{eqnarray*}

On the other hand,  plugging in $f'(t)=k(T^2-t^2)^{\frac{k-2}{2}}t$ and expanding
$$s(t)=\sum_{m=1}^\infty \frac{\sin(2\pi m x)}{\pi m},$$ 
we get that
\begin{eqnarray*}
S(T)&=&k\sum_{m=1}^\infty \frac{1}{m}\int_0^T(T^2-n^2)^{\frac{k-2}{2}}t \sin(2\pi m t)dt\\
&=&k\sum_{m=1}^\infty \frac{\cI_k(mT)}{m^{1+k}}
\end{eqnarray*}
with $\cI_k$ defined in \eqref{e:cI}. Using  \eqref{e:cIbound} we bound  $|\cI_k(mT)|\ll_k (mT)^{k/2}$ hence
\begin{eqnarray*}
|S(T)|\ll\sum_{m=1}^\infty \frac{1}{m^{1+k/2}}T^{k/2}\ll_k T^{k/2}.
\end{eqnarray*}
Finally, combining the two estimates we get that indeed
\begin{eqnarray*}\sum_{|n|<T}(T^2-n^2)^{k/2}&=&2\sum_{n=1}^{[T]} f(n)+f(0)\\&=&2\int_0^T(T^2-t^2)^{k/2}dt+O_k(T^{k/2})\\
&=&2T^{k+1}\int_0^1(1-t^2)^{k/2}dt+O_k(T^{k/2}).
\end{eqnarray*}
\end{proof}
\begin{rem}\label{r:Sharp1}
This estimate was proved in  \cite{Kelmer15} for $k=1$ where it was also shown that the bound on the error is sharp. The same proof also works when $k=2$, however, when $k\geq 3$ the situation is more complicated. We note that for even $k$ the sum $\sum_{|n|<T}(T^2-n^2)^{k/2}$ can be evaluated explicitly in terms of power series to show that the bound is indeed sharp. For example for $k=2$
\begin{eqnarray*}\sum_{|n|<T}(T^2-n^2)
&=&\tfrac{4}{3}T^3-(2\{T\}^2-2\{T\}+\tfrac{1}{3})T+(\tfrac{2}{3}\{T\}^3-\{T\}^2+\tfrac{1}{3}\{T\}).
\end{eqnarray*}
so the bound on the error in Lemma \ref{l:sum2int} is sharp as long as $\{T\}\neq \frac{1}{2}\pm \frac{1}{\sqrt{12}}$.
By a similar calculation, for $k=4$ the bound is sharp as long as $\{T\}\neq 0,1/2$ (that is, if $T$ is not an integer or half integer).
It should be possible to carry this out for any even $k$, however, for $k$ large the combinatorics become unwieldy.
Moreover, for odd $k$ these sums can't be evaluated explicitly, and we could not find a way how to show the bound is sharp for any odd $k>1$.
\end{rem}

In higher dimensions we can get a similar estimate for sufficiently smoothed sums over lattice points in $\R^d$ by using 
Poisson summation and the more general bound \eqref{e:cJbound}. Explicitly, we show the following.
\begin{lem}\label{l:Poisson1}
For any $d\geq 2$, for any lattice $\Lambda\subseteq \R^d$ and any $k\geq d$ we have
$$\mathop{\sum_{v\in \Lambda}}_{\norm{v}\leq T} (T^2-\|v\|^2)^{k/2}=\frac{c_{d,k}}{\vol(\R^d/\Lambda)}T^{k+d}+O_{k,\Lambda}(T^{\frac{d+k-1}{2}}),$$
with $c_{k,d}=\frac{2\pi^{d/2}}{\Gamma(d/2)}\int_0^1 (1-r^2)^{k/2}r^{d-1}dr$.
\end{lem}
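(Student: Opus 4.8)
Write $f_T(x)=(T^2-\|x\|^2)^{k/2}$ for $\|x\|\le T$ and $f_T(x)=0$ for $\|x\|>T$; since $k\ge 2$ this is a continuous, compactly supported radial function on $\R^d$, and because the boundary terms with $\|v\|=T$ contribute $0$, the left-hand side of the asserted identity equals $\sum_{v\in\Lambda}f_T(v)$. The strategy is to apply Poisson summation over $\Lambda$, evaluate the zero-frequency term exactly, and control the remaining frequencies with Lemma \ref{l:cJbound}.

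The first step is to compute $\widehat{f_T}$, with the convention $\widehat{f}(w)=\int_{\R^d}f(x)e^{-2\pi i\langle x,w\rangle}\,dx$. The zero frequency is the volume integral, which in polar coordinates, after the scaling $x\mapsto Tx$, equals
\[
\widehat{f_T}(0)=\int_{\|x\|\le T}(T^2-\|x\|^2)^{k/2}\,dx=\frac{2\pi^{d/2}}{\Gamma(d/2)}\,T^{k+d}\int_0^1(1-r^2)^{k/2}r^{d-1}\,dr=c_{d,k}\,T^{k+d}.
\]
For $w\ne 0$ I would invoke the classical formula expressing the Fourier transform of a radial function as a Bessel (Hankel) transform,
\[
\widehat{f_T}(w)=2\pi\|w\|^{-\frac{d-2}{2}}\int_0^T(T^2-r^2)^{k/2}J_{\frac{d-2}{2}}(2\pi r\|w\|)\,r^{d/2}\,dr,
\]
and then substitute $u=r\|w\|$; comparing the result with the definition \eqref{e:cJ} (matching index $k+2$ and $\nu=\tfrac{d-2}{2}$, so that $\nu+1=d/2$), and keeping track of the powers of $\|w\|$ produced by the change of variables, this collapses to
\[
\widehat{f_T}(w)=2\pi\|w\|^{-d-k}\,\cJ_{\frac{d-2}{2},\,k+2}(T\|w\|),\qquad w\ne 0.
\]
(When $d=2$ the index $\nu=0$ lies outside the range of Lemma \ref{l:cJbound}, so I would first apply the recursion \eqref{e:cJrec} once to rewrite $\cJ_{0,k+2}$ as a multiple of $\cJ_{1,k}$.)

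Now Lemma \ref{l:cJbound} with $\nu=\tfrac{d-2}{2}$ gives $\cJ_{\frac{d-2}{2},k+2}(X)=O_{d,k}\!\big(X^{\frac{k+d-1}{2}}\big)$, hence
\[
|\widehat{f_T}(w)|\ll_{d,k}T^{\frac{k+d-1}{2}}\,\|w\|^{-\frac{d+k+1}{2}},\qquad w\in\Lambda^\vee\setminus\{0\},
\]
where $\Lambda^\vee$ denotes the dual lattice. The exponent $\tfrac{d+k+1}{2}$ exceeds $d$ precisely when $k\ge d$, so under the hypothesis the sum $\sum_{w\in\Lambda^\vee\setminus\{0\}}\|w\|^{-(d+k+1)/2}$ converges, to a constant depending only on $d$, $k$ and $\Lambda$. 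In particular $\widehat{f_T}\in\ell^1(\Lambda^\vee)$ and $f_T$ is continuous, so Poisson summation
\[
\sum_{v\in\Lambda}f_T(v)=\frac{1}{\vol(\R^d/\Lambda)}\sum_{w\in\Lambda^\vee}\widehat{f_T}(w)
\]
is valid; isolating the $w=0$ term and bounding the rest with the displayed frequency estimate yields the main term $\frac{c_{d,k}}{\vol(\R^d/\Lambda)}T^{k+d}$ together with the error $O_{d,k,\Lambda}\!\big(T^{\frac{d+k-1}{2}}\big)$, as claimed.

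I expect the only genuine difficulty to be bookkeeping: propagating the powers of $\|w\|$ correctly through the Hankel substitution, and checking that the convergence threshold for the dual sum coincides exactly with the hypothesis $k\ge d$ (together with the harmless $d=2$ edge case handled by \eqref{e:cJrec}). Beyond that, the argument is a routine combination of Poisson summation with Lemma \ref{l:cJbound}.
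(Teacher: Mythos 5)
Your proof is essentially the same as the paper's: both write the smoothed count as $\sum_{v\in\Lambda}f_T(v)$, express the Fourier transform via the radial (Hankel) formula as $\widehat{f_T}(w)=2\pi\|w\|^{-(d+k)}\cJ_{d/2-1,k+2}(T\|w\|)$, bound the nonzero frequencies with Lemma~\ref{l:cJbound}, and apply Poisson summation, with the convergence condition $k>d-1$ (equivalently $k\geq d$). Your explicit handling of the $d=2$ case, where $\nu=(d-2)/2=0$ falls just outside the hypothesis $\nu>0$ of Lemma~\ref{l:cJbound} and one application of~\eqref{e:cJrec} is needed, is a small refinement that the paper passes over silently.
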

\begin{proof}
Let $f:\R^d\to \R$ be given by $f(v)=f_0(\|v\|)$ with $f_0(r)=(T^2-r^2)^{k/2}\chi_T(r)$, where $\chi_T$ is indicator function of $[0,T]$. With these notations we can write 
$$\mathop{\sum_{v\in \Lambda}}_{\norm{v}\leq T} (T^2-\|v\|^2)^{k/2}=\sum_{v\in \Lambda}f(v).$$
Now, let
$$\hat{f}(u)=\int_{\R^d}f(v)e^{2\pi iv\cdot u}dv,$$
denote the Fourier transform of $f$. Since $f(v)=f_0(\|v\|)$ is spherical, then $\hat{f}(u)=F_0(\norm{u})$ is also spherical (see e.g. \cite[Chapter 6.4]{SteinShakarchi03}) with 
$$F_0(\rho)= 2\pi \rho^{1-d/2}\int_0^TJ_{\frac{d}{2}-1}(2\pi\rho r)f_0(r)r^{d/2}dr.$$
In particular, using that $(\frac{2}{x})^{\nu}J_{\nu}(x)\to \frac{1}{\Gamma(\nu+1)}$ as $x\to 0$ we see that
\begin{eqnarray*}
F_0(0)&=&\frac{2\pi^{d/2}}{\Gamma(d/2)}\int_0^\infty f_0(r)r^{d-1}dr\\
&=&\frac{2\pi^{d/2}}{\Gamma(d/2)}\int_0^T (T^2-r^2)^{k/2}r^{d-1}dr
=c_{k,d}T^{k+d}.\\
\end{eqnarray*}
Moreover, for $\rho\geq 1$, using Lemma \ref{l:cJbound} we can bound 
\begin{eqnarray*}
F_0(\rho)&=& 2\pi \rho^{1-d/2}\int_0^TJ_{\frac{d}{2}-1}(2\pi\rho r)(T^2-r^2)^{k/2}r^{d/2}dr\\
&=&2\pi\cJ_{d/2-1,k+2}(\rho T)\rho^{-(d+k)}\ll_{d,k} T^{\frac{k+d-1}{2}}\rho^{-\frac{d+k+1}{2}}.
\end{eqnarray*}
In particular, as long as $k>d-1$ we have that $\hat{f}\in L^1(\R^d)$ and we can apply Poisson summation to get
\begin{eqnarray*}
 \sum_{v\in \Lambda}f(v)&=&\frac{1}{\vol(\R^d/\Lambda)}\sum_{u\in \Lambda'}\hat{f}(u)\\
&=& \frac{c_{k,d}T^{k+d}}{\vol(\R^d/\Lambda)}+O_{k}(T^{\frac{k+d-1}{2}}\sum_{0\neq u\in \Lambda'}
\frac{1}{\|u\|^{\frac{d+k+1}{2}}})
\end{eqnarray*}
with $\Lambda'$ the dual lattice of $\Lambda$ and the implied constant depending on $\vol(\R^d/\Lambda)$ and on the length of the shortest vector in $\Lambda'$. For $k>d-1$ the last sum over $\Lambda'$ converges thus concluding the proof.
\end{proof}

\subsection{Oscillatory sums}
For  $g\in GL_d^+(\R), \lambda\in \R$ and $x\in \R^{d}$ let $H_T(g,\lambda,x)$ denote the following oscillatory sum
\begin{equation}\label{e:HT0}
H_T(g,\lambda, x)=\sum_{\norm{ng}<T}\left(s(\tfrac{\sqrt{T^2-\norm{ng}^2}}{\lambda}- n\cdot x)+s(\tfrac{\sqrt{T^2-\norm{ng}^2}}{\lambda}+n\cdot x)\right),\end{equation}
with $s(x)=\tfrac{1}{2}-\{x\}$ as before. We note for future reference that, by expanding $s(x)$ into its Fourier expansion, we get the following series expansion for this sum.
\begin{equation}\label{e:HT}
H_T(g,\lambda, x)=\frac{2}{\pi}\sum_{m=1}^\infty\frac{1}{m}\sum_{\norm{ng}<T}\sin(\tfrac{2\pi m\sqrt{T^2-\norm{ng}^2}}{\lambda})\cos(2\pi m n\cdot x).\end{equation}
We also consider smoothed versions obtained by integrating $H_T(g,\lambda,x)$ against  suitable kernels.  Explicitly, for $j\geq 1$ we define 
\begin{equation}\label{e:HTj}
H_T^{(j)}( g, \lambda, x)=\int_0^T H_t(g,\lambda, x) (T^2-t^2)^{\tfrac{j-2}{2}} tdt.
\end{equation}
For notational convenience we also denote $H^{(0)}_{T}( g, \lambda, x):=H_{T}( g, \lambda, x)$.
Note that $H^{(j)}_{T}(g,\lambda,x)$ is trivially bounded by $O(T^{d+j})$, however, we expect them to be generically much smaller. As evidence for this we give the following estimates for their average and mean square.

\begin{prop}\label{p:meansquare}
For any $g\in \GL_{d}^+(\R), \lambda>0$ and $T>1$ we have that on average
$$\int_{(\R/\Z)^{d}} H_T(g,\lambda;x)dx=O(1),$$
and in the mean square
$$\int_{(\R/\Z)^{d}} |H_T(g,\lambda;x)|^2dx\ll \cN_T(g)\log^2\cN_T(g).$$
\end{prop}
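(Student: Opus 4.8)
### Proof proposal for Proposition \ref{p:meansquare}

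\textbf{Setup.} The plan is to work directly with the Fourier expansion \eqref{e:HT} of $H_T(g,\lambda,x)$ and exploit orthogonality of the characters $e^{2\pi i m n\cdot x}$ on $(\R/\Z)^d$. Write
$$H_T(g,\lambda,x)=\frac{2}{\pi}\sum_{m=1}^\infty\frac{1}{m}\sum_{\norm{ng}<T}\sin\!\big(\tfrac{2\pi m\sqrt{T^2-\norm{ng}^2}}{\lambda}\big)\cos(2\pi m n\cdot x),$$
so that integrating term-by-term over $x\in(\R/\Z)^d$ picks out only the contributions with $mn=0$, i.e. $n=0$. For $n=0$ the summand is $\sin(\tfrac{2\pi m T}{\lambda})/m$, giving $\int H_T\,dx = \tfrac{2}{\pi}\sum_{m\geq 1}\sin(\tfrac{2\pi mT}{\lambda})/m$, which is the Fourier series of the (bounded) sawtooth $s(T/\lambda)$ evaluated at $T/\lambda$; hence $\int H_T(g,\lambda,x)\,dx = 2s(T/\lambda)=O(1)$. (One should be mildly careful about interchanging sum and integral: truncate $m\leq M$, integrate, and let $M\to\infty$ using that the partial sums of the sawtooth series are uniformly bounded, or simply integrate the defining expression \eqref{e:HT0} directly, since $\int_{(\R/\Z)^d} s(c-n\cdot x)\,dx=0$ for $n\neq 0$.)

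\textbf{Mean square.} For the $L^2$ bound, it is cleaner to avoid the Fourier expansion and instead use Parseval directly on the function $x\mapsto H_T(g,\lambda,x)$, viewing \eqref{e:HT0} as a finite sum (over the $\cN_T(g)$ lattice points $n$ with $\norm{ng}<T$) of dilated sawtooth functions. Group the lattice points $n$ with $\norm{ng}<T$ into $\pm$ pairs and set, for each such $n$, $c_n=c_n(T)=\sqrt{T^2-\norm{ng}^2}/\lambda$; then $H_T(g,\lambda,x)=\sum_{\norm{ng}<T}\big(s(c_n-n\cdot x)+s(c_n+n\cdot x)\big)$. Expanding the square and integrating over $(\R/\Z)^d$, cross terms $\int s(c_n\pm n\cdot x)\,s(c_{n'}\pm n'\cdot x)\,dx$ vanish unless $n=\pm n'$ (since the Fourier support of $x\mapsto s(c_n\pm n\cdot x)$ consists of the integer multiples of $n$, and two such supports meet only at $0$, where $s$ has mean zero, unless $n'\in\Z n$; here one uses that distinct lattice points $n,n'$ with $n'\in\Z n$ would force $n'=\pm n$ because consecutive lattice points on a line through $0$... actually $n'=2n$ is possible — see the obstacle below). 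The diagonal terms each contribute $\int_{(\R/\Z)^d}|s(c_n\pm n\cdot x)|^2\,dx$, which, since $\gcd$ of the coordinates of $n$ divides into a unit reparametrization, equals $\int_0^1 s(t)^2\,dt=\tfrac{1}{12}$ by a change of variables along the direction of $n$. Summing over the $O(\cN_T(g))$ lattice points gives the main term $O(\cN_T(g))$.

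\textbf{The obstacle: overlapping Fourier supports.} The main technical point is that the Fourier supports of $x\mapsto s(c_n-n\cdot x)$ for different $n$ are \emph{not} disjoint: $s(c_n-n\cdot x)$ has nonzero Fourier coefficient $\tfrac{1}{2\pi i m}e^{2\pi i mc_n}$ at frequency $mn$ for every $m\geq 1$, so two points $n$ and $n'=m n/m'$ lying on a common ray through the origin interact. Consequently the cross terms do not all vanish, and one gets a contribution
$$\sum_{\substack{n\neq n'\\ n'=\frac{a}{b}n,\ \gcd(a,b)=1}} \frac{1}{2\pi^2}\cdot\frac{\cos(2\pi b\,c_n(T)-2\pi a\,c_{n'}(T))\cdot(\dots)}{ab}$$
type expressions (the exact shape coming from matching frequency $an=bn'$). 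Bounding this crudely by $\sum 1/(ab)$ over collinear pairs gives, for each primitive direction $w$ with $\norm{wg}<T$, a double sum $\sum_{a,b\geq 1,\ \max(a,b)\norm{wg}<T}\tfrac{1}{ab}\ll \log^2(T)$, and summing over the $O(\cN_T(g))$ primitive directions yields the bound $O(\cN_T(g)\log^2(T))\le O(\cN_T(g)\log^2\cN_T(g))$ after noting $\cN_T(g)\gg T^d$ so $\log T\ll\log\cN_T(g)$. This is exactly where the two logarithmic factors in the statement (and ultimately in Theorem \ref{t:main}) come from, so I expect the careful accounting of these collinear-pair cross terms — keeping the oscillation $e^{2\pi i(b c_n-a c_{n'})}$ only for the trivial absolute-value bound — to be the crux of the argument. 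A mild subtlety worth isolating as a sub-lemma: each primitive direction $w$ carries at most one lattice-point-pair $\pm n$ of the form $n=kw$ for each $k\geq 1$, so ``collinear pairs'' are indexed by $(w; a,b)$ with $w$ primitive and $a\neq b$ positive integers with $\max(a,b)\|wg\|<T$, and the number of primitive $w$ with $\|wg\|<T$ is $O(T^d)=O(\cN_T(g))$ by a standard lattice-point count (or crudely dominated by $\cN_T(g)$ itself).
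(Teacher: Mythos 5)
Your proof of the average bound agrees with the paper's (which separates the $n=0$ term and notes it is the only one that survives integration, giving $1-2\{T/\lambda\}=2s(T/\lambda)$).

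For the mean square your route is genuinely different from the paper's. The paper does not use Parseval on the full function. Instead it fixes a large parameter $A$, splits the $m$-sum in the Fourier expansion \eqref{e:HT} at $m=A$, applies Cauchy--Schwarz in $m$ to the low range and Cauchy--Schwarz in $n$ to the high range, uses orthogonality to evaluate the resulting $x$-integrals (getting $\cN_T(g)$ and $1/A$ respectively), arrives at $\log^2(A)\,\cN_T(g)+\cN_T(g)^2/A$, and optimizes with $A=\cN_T(g)$. This is a robust, essentially combinatorics-free argument, and it is where the $\log^2$ comes from. Your approach --- full Parseval plus an explicit analysis of collinear lattice points --- is a legitimate alternative and in fact, done correctly, yields a \emph{stronger} bound, but your accounting of the cross terms contains a real error, described next.

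For a primitive direction $w$ carrying the lattice points $n=kw$, $1\le k\le K_w:=\lfloor T/\norm{wg}\rfloor$, the inner product of the two sawtooth packets attached to $n=kw$ and $n'=k'w$ has common Fourier support on $\mathrm{lcm}(k,k')\,w\cdot\Z$, and its absolute value is
\[
\ll \frac{kk'}{\mathrm{lcm}(k,k')^2}=\frac{\gcd(k,k')^2}{kk'}=\frac{1}{ab},\qquad a=\frac{k'}{\gcd(k,k')},\ b=\frac{k}{\gcd(k,k')}.
\]
Your bound $\tfrac1{ab}$ per pair is fine, but you then replace the sum over \emph{pairs} $(k,k')$ by a sum over the reduced ratios $(a,b)$ with $\max(a,b)\le K_w$, which silently collapses the $\lfloor K_w/\max(a,b)\rfloor$ pairs $(k,k')=(gb,ga)$, $g\ge 1$, sharing the same ratio into a single term. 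The correct per-direction cross-term sum is
\[
\sum_{\substack{1\le k\ne k'\le K_w}}\frac{\gcd(k,k')^2}{kk'}
=\sum_{\substack{a\ne b\\ \gcd(a,b)=1}}\frac{\lfloor K_w/\max(a,b)\rfloor}{ab}
\asymp K_w,
\]
not $\log^2 K_w$. Equivalently, via Parseval one gets $\sum_{l\ge1}l^{-2}\bigl(\sum_{d\mid l,\,d\le K_w}d\bigr)^2\ll K_w$ per primitive direction. Summing over primitive $w$ in a half-space then gives $\sum_w K_w\ll\cN_T(g)$, so the Parseval route actually produces $\int_{(\R/\Z)^d}|H_T|^2\,dx\ll\cN_T(g)$ with no logarithms at all --- stronger than the stated proposition and than your claimed $\cN_T(g)\log^2 T$. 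So the conclusion is salvageable (indeed improvable), but the step going from ``$1/ab$ per pair'' to ``$\sum_{a,b}1/ab\ll\log^2 T$ per direction'' is wrong as written and needs the multiplicity factor (or the $\gcd^2$) reinstated.
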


\begin{proof}
Starting from 
$$H_T(g,\lambda;x)=(1-2\{\tfrac{T}{\lambda}\})+\frac{2}{\pi}\sum_{m=1}^\infty\frac{1}{m}\sum_{0<\norm{ng}<T}\sin(\tfrac{2\pi m\sqrt{T^2-\norm{ng}^2}}{\lambda})\cos(2\pi m n\cdot x),$$
and averaging over $x\in (\R/\Z)^d$ all terms but the first vanish hence 
$$\int_{(\R/\Z)^{d}} H_T(g,\lambda;x)dx=(1-2\{\tfrac{T}{\lambda}\}).$$

Next, fix a large parameter $A\geq 2$ (to be determined later) and separate the sum over $m$ into two ranges, say,
\begin{eqnarray*}
\cJ_1&=&\frac{2}{\pi}\sum_{m=1}^A\frac{1}{m}\sum_{0<\norm{ng}<T}\sin(\tfrac{2\pi m\sqrt{T^2-\norm{ng}^2}}{\lambda})\cos(2\pi mn\cdot x)\\
\cJ_2&=&\frac{2}{\pi}\sum_{m>A}\frac{1}{m}\sum_{0<\norm{ng}<T}\sin(\tfrac{2\pi m\sqrt{T^2-\norm{ng}^2}}{\lambda})\cos(2\pi m n\cdot x)\\
\end{eqnarray*}
 so that 
\begin{eqnarray*}
|H_T(g,\lambda;x))|^2&\leq& 4(|\cJ_1|^2+|\cJ_2|^2+1)
\end{eqnarray*}
We can bound each of the terms as follows: For the first term, using Cauchy-Schwarz on the $m$-sum we get 
\begin{eqnarray*}
|\cJ_1|^2\leq \log(A)\sum_{m\leq A}\frac{1}{m}\bigg|\sum_{0<\norm{ng}<T}\sin(\tfrac{2\pi m\sqrt{T^2-\norm{ng}^2}}{\lambda})\cos(2\pi m n\cdot x)\bigg|^2.
\end{eqnarray*}
For the second term,  exchanging the order of summation and using Cauchy-Schwarz in the $n$-sum we get
\begin{eqnarray*}
|\cJ_2|^2\leq \cN_T(g)\sum_{0<\norm{ng}<T}\bigg|\sum_{m>A}\frac{1}{m} \sin(\tfrac{2\pi m\sqrt{T^2-\norm{ng}^2}}{\lambda})\cos(2\pi mn\cdot x)\bigg|^2 .
\end{eqnarray*}
Note that, in both cases the dependence on $x$ is only in the inner most sum, hence
\begin{eqnarray*}
\lefteqn{\int_{(\R/\Z)^d}|H_T(g,\lambda,x)|^2dx\ll}\\
&& \log(A)\sum_{m\leq A}\frac{1}{m}\int_{(\R/\Z)^d}\bigg|\sum_{0<\norm{ng}<T}\sin(\tfrac{2\pi m \sqrt{T^2-\norm{ng}^2}}{\lambda})\cos(2\pi mn\cdot x)\bigg|^2dx\\
\nonumber &&+ \cN_T(g)\sum_{0<\norm{ng}<T}\int_{(\R/\Z)^d}\bigg|\sum_{m>A}\frac{\sin(\tfrac{2\pi m\sqrt{T^2-\norm{ng}^2}}{\lambda})\cos(2\pi mn\cdot x)}{m} \bigg|^2dx .
\end{eqnarray*}
Using orthogonality, we can evaluate the inner integrals  by 
\begin{eqnarray*}
\int_{(\R/\Z)^d}\bigg|\sum_{\norm{ng}<T}\sin(\tfrac{2\pi m \sqrt{T^2-\norm{ng}^2}}{\lambda})\cos(2\pi mn\cdot x)\bigg|^2dx
&=&\sum_{\norm{ng}<T}\sin^2(\tfrac{2\pi m \sqrt{T^2-\norm{ng}^2}}{\lambda})\\
&&\leq \cN_T(g),
\end{eqnarray*}
and
\begin{eqnarray*}
\int_{(\R/\Z)^d}\bigg|\sum_{m>A}\frac{\sin(\tfrac{2\pi m\sqrt{T^2-\norm{ng}^2}}{\lambda})\cos(2\pi m n\cdot x)}{m} \bigg|^2dx&=&\frac{1}{2^d}\sum_{m>A}\frac{\sin^2(\tfrac{2\pi m\sqrt{T^2-\norm{ng}^2}}{\lambda})}{m^2}\\
&&\ll\frac{1}{A}.
\end{eqnarray*}
Plugging these back we get that
\begin{eqnarray*}
\int_{(\R/\Z)^d}|H_T(g,\lambda,x)|^2dx\ll \log^2(A)\cN_T(g)+\frac{\cN_T(g)^2}{A},
\end{eqnarray*}
and taking $A=\max(2,\cN_T(g))$ concludes the proof.
\end{proof}

For the smoothed  oscillatory sums we can use \eqref{e:cIbound}  to get the following average bounds.
\begin{prop}\label{p:meansquare2}
Let $j\geq 1$. For any $g\in \GL_{d}^+(\R),\; \lambda>0$ and $T>1$ we have on average
$$\int_{(\R/\Z)^{d}} H_T^{(j)}(g,\lambda;x)dx\ll_j  T^{j/2},$$
and in the mean square
$$\int_{(\R/\Z)^{d}} |H_T^{(j)}(g,\lambda;x)|^2dx\ll_j (\lambda T)^j\cN_T(g).$$
\end{prop}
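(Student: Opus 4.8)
The plan is to reduce both bounds to the single oscillatory integral $\cI_j$ of \eqref{e:cI} — for which the decay estimate \eqref{e:cIbound} is already available — and then to conclude by orthogonality of characters on $(\R/\Z)^d$, just as in the proof of Proposition \ref{p:meansquare}. First I would substitute the Fourier expansion \eqref{e:HT} of $H_t$ into the definition \eqref{e:HTj} of $H_T^{(j)}$ and interchange the sum over $m$ with the $t$-integral and the finite sum over $n$; this is legitimate because the symmetric partial sums of the sawtooth Fourier series are uniformly bounded, so the partial sums defining $H_t$ are uniformly bounded by $O(t^d)$ and dominated convergence applies (the same interchange already appears in Lemma \ref{l:sum2int} and Proposition \ref{p:meansquare}). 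For a fixed $n$ with $\norm{ng}<T$ this leaves the integral $\int_{\norm{ng}}^{T}\sin\!\big(\tfrac{2\pi m\sqrt{t^2-\norm{ng}^2}}{\lambda}\big)(T^2-t^2)^{\frac{j-2}{2}}t\,dt$, and the substitution $v=\sqrt{t^2-\norm{ng}^2}$ (so that $v\,dv=t\,dt$ and $T^2-t^2=(T^2-\norm{ng}^2)-v^2$) followed by $w=mv/\lambda$ turns it into $(\lambda/m)^{j}\,\cI_j\!\big(\tfrac{m\sqrt{T^2-\norm{ng}^2}}{\lambda}\big)$ — the same computation that produced $\cI_k(mT)$ in Lemma \ref{l:sum2int}, now with $T$ replaced by $\sqrt{T^2-\norm{ng}^2}$. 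Collecting terms (the $n=0$ term contributing with $\sqrt{T^2-0}=T$) yields the closed form
$$H_T^{(j)}(g,\lambda,x)=\frac{2\lambda^{j}}{\pi}\sum_{m\geq 1}\frac{1}{m^{j+1}}\sum_{\norm{ng}<T}\cI_j\!\Big(\tfrac{m\sqrt{T^2-\norm{ng}^2}}{\lambda}\Big)\cos(2\pi m\,n\cdot x).$$

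With this closed form the two bounds follow quickly. For the average I would integrate over $x\in(\R/\Z)^d$: orthogonality of the characters kills every term with $n\neq 0$, leaving $\frac{2\lambda^{j}}{\pi}\sum_{m\geq1}m^{-(j+1)}\cI_j(mT/\lambda)$, which by \eqref{e:cIbound} is bounded by a constant multiple of $\lambda^{j/2}T^{j/2}\sum_{m\geq1}m^{-1-j/2}$, a convergent series since $j\geq 1$; this gives the asserted average bound. For the mean square I would write $H_T^{(j)}=\frac{2\lambda^{j}}{\pi}\sum_{m\geq1}m^{-(j+1)}S_m$ with $S_m(x):=\sum_{\norm{ng}<T}\cI_j\!\big(\tfrac{m\sqrt{T^2-\norm{ng}^2}}{\lambda}\big)\cos(2\pi m\,n\cdot x)$. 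Exactly as in the evaluation of the inner integrals in the proof of Proposition \ref{p:meansquare} (orthogonality of $\{\cos(2\pi m\,n\cdot x)\}_n$ for fixed $m$, together with the fact that $\cI_j$ depends on $n$ only through $\norm{ng}$), one gets $\|S_m\|_{L^2}^2=\sum_{\norm{ng}<T}\cI_j\!\big(\tfrac{m\sqrt{T^2-\norm{ng}^2}}{\lambda}\big)^2$, which using $\sqrt{T^2-\norm{ng}^2}\leq T$ and \eqref{e:cIbound} is $\ll_j \cN_T(g)(mT/\lambda)^{j}$. The triangle inequality in $L^2((\R/\Z)^d)$ applied to the series for $H_T^{(j)}$ then gives
$$\|H_T^{(j)}\|_{L^2}\leq \frac{2\lambda^{j}}{\pi}\sum_{m\geq1}\frac{\|S_m\|_{L^2}}{m^{j+1}}\ll_j \lambda^{j}\,\cN_T(g)^{1/2}\Big(\tfrac{T}{\lambda}\Big)^{j/2}\sum_{m\geq1}m^{-1-j/2}\ll_j (\lambda T)^{j/2}\cN_T(g)^{1/2},$$
and squaring yields $\int_{(\R/\Z)^d}|H_T^{(j)}(g,\lambda,x)|^2\,dx\ll_j (\lambda T)^{j}\cN_T(g)$.

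The one place where care is genuinely needed is Step 1: the $t$-integral must be carried out exactly rather than estimated crudely. It is tempting to bound $H_T^{(j)}$ by pulling the $L^2_x$-norm inside the $t$-integral via Minkowski's inequality and invoking Proposition \ref{p:meansquare}, but the kernel $(T^2-t^2)^{(j-2)/2}t$ has total mass of order $T^{j}$, so this costs a factor of size $T^{j}$ and produces a useless bound; the oscillation in $t$ has to be exploited, and integrating against $\sin(2\pi m\sqrt{t^2-\norm{ng}^2}/\lambda)$ is precisely what converts that oscillation into the gain $\cI_j(X)=O_j(X^{j/2})$ of \eqref{e:cIbound}. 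Everything else is bookkeeping: justifying the term-by-term integration of the conditionally convergent sawtooth series, and tracking the powers of $\lambda$ and $m$ that make the final sum over $m$ converge — which is exactly where the hypothesis $j\geq 1$ is used.
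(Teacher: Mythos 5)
Your proof is correct and follows essentially the same route as the paper: expanding the sawtooth, interchanging the $m$- and $t$-sums/integral, and substituting $v=\sqrt{t^2-\norm{ng}^2}$ to express $H_T^{(j)}$ as $\frac{2\lambda^j}{\pi}\sum_m m^{-1-j}\sum_{\norm{ng}<T}\cI_j\big(\tfrac{m}{\lambda}\sqrt{T^2-\norm{ng}^2}\big)\cos(2\pi m\, n\cdot x)$, then using orthogonality on $(\R/\Z)^d$. The one small deviation is in the mean square: you apply the triangle inequality in $L^2$ (Minkowski) to the series $\sum_m m^{-1-j}S_m$ and then bound $\|S_m\|_{L^2}$ termwise, whereas the paper first applies Cauchy--Schwarz pointwise in $x$ to the $m$-sum (splitting $m^{-1-j}=m^{-3/4}\cdot m^{-1/4-j}$) and then integrates; both arguments are valid, converge for $j\geq 1$, and produce the same bound $(\lambda T)^j\cN_T(g)$. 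As a side note, both your proof and the paper's actually yield $(\lambda T)^{j/2}$ for the average, not the bare $T^{j/2}$ appearing in the proposition's statement; this is a small discrepancy in the paper itself (harmless in context since $\lambda$ is fixed), not a flaw in your argument.
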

\begin{proof}
Changing the order of summation and integration we get that
\begin{eqnarray*}
H_T^{(j)}( g, \lambda, x)&=&\int_0^T (T^2-t^2)^{\tfrac{j-2}{2}}H_t(g,\lambda, x) tdt\\
&=&\frac{2}{\pi}\int_0^T (T^2-t^2)^{\tfrac{j-2}{2}}\sum_{m=1}^\infty\frac{1}{m}\sum_{\norm{ng}<t}\sin(\tfrac{2\pi m\sqrt{t^2-\norm{ng}^2}}{\lambda})\cos(2\pi m n\cdot x)tdt\\
&=& \frac{2}{\pi}\sum_{m=1}^\infty\frac{1}{m} \sum_{\norm{ng}<T}\left(\int_{\norm{ng}}^T (T^2-t^2)^{\tfrac{j-2}{2}}
\sin(\tfrac{2\pi m\sqrt{t^2-\norm{ng}^2}}{\lambda})tdt\right)\cos(2\pi m n\cdot x).\\
\end{eqnarray*}
Next, in each integral make the change of variables $u=\tfrac{m\sqrt{t^2-\norm{ng}^2}}{\lambda}$ to get that 
$$\int_{\norm{ng}}^T (T^2-t^2)^{\tfrac{j-2}{2}}
\sin(\tfrac{2\pi m\sqrt{t^2-\norm{ng}^2}}{\lambda})tdt
=(\frac{\lambda}{m})^{j}\cI_j(\frac{m}{\lambda}\sqrt{T^2-\norm{ng}^2}),$$
with $\cI_j(X)$ defined in \eqref{e:cI}. Plugging this back we see that
\begin{eqnarray*}
H_T^{(j)}( g, \lambda, x)&=& \frac{2\lambda^j}{\pi}\sum_{m=1}^\infty\frac{1}{m^{1+j}} \sum_{\norm{ng}<T}\cI_j(\frac{m}{\lambda}\sqrt{T^2-\norm{ng}^2})\cos(2\pi m n\cdot x).\\
\end{eqnarray*}

When averaging over $x\in (\R/\Z)^d$ all terms with $n\neq 0$ vanish, and using \eqref{e:cIbound}  we get
\begin{eqnarray*}
\int_{(\R/\Z)^{d}} H_T^{(j)}(g,\lambda;x)dx&=& \frac{2\lambda^j}{\pi}\sum_{m=1}^\infty\frac{1}{m^{1+j}} \cI_j(\frac{m}{\lambda}T)\ll (\lambda T)^{j/2}.\\
\end{eqnarray*}
as claimed.

Next, for the mean square, writing $\frac{1}{m^{1+j}}=\frac{1}{m^{3/4}}\frac{1}{m^{1/4+j}}$ and using Cauchy-Schwartz we get
\begin{eqnarray*}
|H_T^{(j)}( g, \lambda, x)|^2&\ll& \lambda^{2j}\sum_{m=1}^\infty\frac{1}{m^{1/2+2j}}\left|\sum_{\norm{ng}<T}\cI_j(\frac{m}{\lambda}\sqrt{T^2-\norm{ng}^2})\cos(2\pi m n\cdot x)\right|^2.\\
\end{eqnarray*}
Integrating over $x\in (\R/\Z)^d$ we are left with just diagonal terms, giving
\begin{eqnarray*}
\int_{(\R/\Z)^d}|H_T^{(j)}( g, \lambda, x)|^2dx&\ll& \lambda^{2j}\sum_{m=1}^\infty\frac{1}{m^{1/2+2j}}\sum_{\norm{ng}<T}|\cI_j(\frac{m}{\lambda}\sqrt{T^2-\norm{ng}^2})|^2.\\
\end{eqnarray*}
Using \eqref{e:cIbound} we bound
$$|\cI_j(\frac{m}{\lambda}\sqrt{T^2-\norm{ng}^2})|^2\ll (\frac{m}{\lambda})^{j}(T^2-\norm{ng}^2)^{j/2}\ll (\tfrac{mT}{\lambda})^j,$$ 
and summing up the series concludes the proof.

\end{proof}
\section{Inductive formula}
We introduce the following coordinates on $\GL_d(\R)^+$ that are suitable for inducting on dimension. Any $g\in\GL^+_d(\R)$ we can write in the form
$$g=\left(\begin{matrix} g^{(d-1)} & \lambda_d x^{(d-1)}\\ 0 &\lambda_d \end{matrix}\right)k^{(d)},$$ 
with $k^{(d)}\in \SO(d)\;, g^{(d-1)}\in \GL^+_{d-1}(\R), \lambda_d>0$, and $x^{(d-1)}\in \R^{d-1}$. Here $k^{(d)}$ is determined up to left multiplication by 
$\left(\begin{smallmatrix}\tilde{k} &0\\ 0& 1\end{smallmatrix}\right)$ with $\tilde{k}\in \SO(d-1)$ and similarly $g^{(d-1)}$ is determined up to right multiplication by $\tilde{k}\in \SO(d-1)$. We can further decompose 
$$g^{(d-1)}=\left(\begin{matrix} g^{(d-2)} & \lambda_{d-1} x^{(d-2)}\\ 0 &\lambda_{d-1} \end{matrix}\right)k^{(d-1)},$$ 
and so on, denoting by $g^{(l)}\in \GL_{l}^+(\R), x^{(l)}\in\R^l$ the corresponding matrices and vectors for $l=1,\ldots, d-1$. 

\begin{rem}
Comparing these coordinates to the coordinates coming from the $G=UAK$ decomposition with $U$ unitary upper triangular, $A$ diagonal, and $K=\SO(d)$, we can write $g=uak$ as the product of a unitary upper triangular matrix $u\in U$ with $x^{(1)},\ldots x^{(d-1)}$ above the diagonal, a diagonal matrix $a\in A$ with $\lambda_1,\ldots,\lambda_d$ on the diagonal and an orthogonal matrix $k\in K$.
\end{rem}

The following result is a natural generalization of \cite[Lemma 3]{Kelmer15}
\begin{lem}\label{p:reduction}
For $g=\left(\begin{smallmatrix} g^{(d-1)} & \lambda_d x^{(d-1)}\\ 0 &\lambda_d \end{smallmatrix}\right)k$ as above we have 
$$\cN_T(g)=2\lambda_d^{-1}P_T( g^{(d-1)})+H_T( g^{(d-1)},\lambda_d, x^{(d-1)}),$$
where $P_T(g)=\int_0^T \frac{\cN_t(g)t}{\sqrt{T^2-t^2}}dt$, and  $H_T(g,\lambda, x)$ is given in \eqref{e:HT}.
\end{lem}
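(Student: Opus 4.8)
The plan is to peel off the last coordinate of a lattice vector, thereby reducing the $d$-dimensional count $\cN_T(g)$ to a sum of one-dimensional counts governed by $g^{(d-1)}$. Since $\cN_T$ is right $\SO(d)$-invariant we may discard the factor $k$ and assume $g=\left(\begin{smallmatrix} g^{(d-1)} & \lambda_d x^{(d-1)}\\ 0 &\lambda_d \end{smallmatrix}\right)$. Writing $n=(n',n_d)\in\Z^{d-1}\times\Z$, a direct multiplication with the block form of $g$ gives $\|ng\|^2=\|n'g^{(d-1)}\|^2+\lambda_d^2(n'\cdot x^{(d-1)}+n_d)^2$, so $\cN_T(g)$ counts the pairs $(n',n_d)$ with $\|n'g^{(d-1)}\|^2+\lambda_d^2(n'\cdot x^{(d-1)}+n_d)^2<T^2$.

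Next, for each fixed $n'$ with $\|n'g^{(d-1)}\|<T$, set $L=L(n'):=\lambda_d^{-1}\sqrt{T^2-\|n'g^{(d-1)}\|^2}$; the admissible $n_d$ are exactly the integers in the open interval $\big(-n'\cdot x^{(d-1)}-L,\ -n'\cdot x^{(d-1)}+L\big)$. I would count these using $\lfloor y\rfloor=y-\tfrac12+s(y)$: away from the measure-zero set where an endpoint is an integer, the number of integers in $(a,b)$ equals $(b-a)+s(b)-s(a)$, and here, using that $s$ is odd, this is $2L+s(L-n'\cdot x^{(d-1)})+s(L+n'\cdot x^{(d-1)})$. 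Summing over $n'$ with $\|n'g^{(d-1)}\|<T$, the two sawtooth terms assemble precisely into $H_T(g^{(d-1)},\lambda_d,x^{(d-1)})$ as given in \eqref{e:HT0} (equivalently \eqref{e:HT}), while the main term becomes $2\lambda_d^{-1}\sum_{\|n'g^{(d-1)}\|<T}\sqrt{T^2-\|n'g^{(d-1)}\|^2}$.

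It remains to identify this last sum with $P_T(g^{(d-1)})$. Using $\sqrt{T^2-r^2}=\int_r^T \tfrac{t\,dt}{\sqrt{T^2-t^2}}$ for $0\le r\le T$ and interchanging sum and integral (a layer-cake decomposition in $t$),
$$\sum_{\|n'g^{(d-1)}\|<T}\sqrt{T^2-\|n'g^{(d-1)}\|^2}=\int_0^T\frac{t}{\sqrt{T^2-t^2}}\Big(\sum_{\|n'g^{(d-1)}\|<t}1\Big)dt=\int_0^T\frac{\cN_t(g^{(d-1)})\,t}{\sqrt{T^2-t^2}}\,dt=P_T(g^{(d-1)}),$$
which yields the claimed identity. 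There is no serious obstacle; the only thing to watch is the bookkeeping at the boundary — lattice points with $\|ng\|=T$ exactly, or interval endpoints that happen to be integers — but these form a set of measure zero in $T$ and are handled by the standard convention that $s$ takes its midpoint value at integers, while the integral defining $P_T$ converges since $(T^2-t^2)^{-1/2}$ has an integrable singularity at $t=T$ and $\cN_t(g^{(d-1)})$ is bounded on $[0,T]$. The argument is the natural extension of \cite[Lemma 3]{Kelmer15} from $d=2$ to arbitrary $d$.
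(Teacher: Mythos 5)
Your argument is correct and is essentially the paper's proof: you discard $k$ by $\SO(d)$-invariance, peel off the last coordinate to reduce to counting integers $n_d$ in an interval of length $2L$ around $-n'\cdot x^{(d-1)}$, identify the sawtooth contributions with $H_T$, and convert the main term into $P_T$ via the identity $\sqrt{T^2-r^2}=\int_r^T t(T^2-t^2)^{-1/2}\,dt$ (the paper phrases this last step as summation by parts, but it is the same layer-cake computation). The only cosmetic difference is that you work with $s(y)$ directly via the identity $\#\big(\Z\cap(a,b)\big)=(b-a)+s(b)-s(a)$, while the paper passes through floor and fractional part before recombining into $s$.
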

\begin{proof}
To simplify notation denote by $\lambda=\lambda_d,x=x^{(d-1)}$ and $\tilde{g}=g^{(d-1)}$.
Since the counting function is invariant under rotation we may assume $g=\begin{pmatrix} \tilde g & \lambda x\\ 0 &\lambda \end{pmatrix}$ in which case, a general element,  $v\in \Z^d g$, is of the form 
$$v=(n\tilde g,\lambda(n\cdot x+m)),\; n\in \Z^{d-1},\; m\in \Z.$$
Hence 
\begin{eqnarray*}
\cN_T(g)&=&\#\{(n,m)\in \Z^{d-1}\times \Z : \norm{n\tilde g}^2+\lambda^2 |n\cdot x+m|^2<T^2\}\\
&=& \#\{(n,m)\in \Z^{d-1}\times \Z: |n\cdot x+m|<\tfrac{\sqrt{T^2-\norm{n\tilde g}^2}}{\lambda}\}\\
&=& \sum_{\norm{n\tilde g}<T}\#\{m: |n\cdot x+m|<\tfrac{\sqrt{T^2-\norm{n\tilde g}^2}}{\lambda}\}\\
&=& \sum_{\norm{n\tilde g}<T}[\tfrac{\sqrt{T^2-\norm{n\tilde g}^2}}{\lambda}-n\cdot x]-[-\tfrac{\sqrt{T^2-\norm{n\tilde g}^2}}{\lambda}-n\cdot x]\\
&=&\frac{2}{\lambda} \sum_{\norm{n\tilde g}<T}\sqrt{T^2-\norm{n\tilde g}^2}+ \sum_{\norm{n\tilde g}<T}(1-\{\tfrac{\sqrt{T^2-\norm{n\tilde g}^2}}{\lambda}-n\cdot x\}-\{\tfrac{\sqrt{T^2-\norm{n\tilde g}^2}}{\lambda}+n\cdot x\}).\\
\end{eqnarray*}
For the first sum in the last line, summation by parts gives 
$$\frac{2}{\lambda}\sum_{\norm{n\tilde g}<T}\sqrt{T^2-\norm{n\tilde g}^2}=\frac{2}{\lambda}\int_0^T \frac{\cN_{t}(\tilde g)t}{\sqrt{T^2-t^2}}dt=2\lambda^{-1}P_T(\tilde g),$$
while the second sum is 
$$\sum_{\norm{n\tilde g}<T}\left(s(\tfrac{\sqrt{T^2-\norm{n\tilde g}^2}}{\lambda}-n\cdot x)+s(\tfrac{\sqrt{T^2-\norm{n\tilde g}^2}}{\lambda}+n\cdot x\})\right)=H_T(\tilde g,\lambda,x),$$
thus concluding the proof.
\end{proof}

We can now continue inductively to obtain the following more general formula
\begin{prop}\label{p:inductive}
For any $g\in \GL_d^+(\R)$ and $k< d$ we have constants $c_0,\ldots, c_k$ (depending on the class of $g$ in $U_d(\R)\bk \GL_d(\R)/\SO(d)$) such that
\begin{equation*}
\cN_T(g)=c_k\int_0^T \cN_{t}(g^{(d-k)})(T^2-t^2)^{\tfrac{k-2}{2}}tdt+\sum_{j=0}^{k-1} c_j H_T^{(j)}(g^{(d-j-1)},\lambda_{d-j},x^{(d-j-1)}).
\end{equation*}
\end{prop}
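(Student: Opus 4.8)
The plan is to prove the identity by induction on $k$, using Lemma \ref{p:reduction} both as the base case and as the step that lowers the dimension by one.

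For the base case $k=1$, observe that $P_T(g)=\int_0^T\cN_t(g)(T^2-t^2)^{-1/2}t\,dt$ is precisely the integral $\int_0^T\cN_t(g^{(d-1)})(T^2-t^2)^{\tfrac{1-2}{2}}t\,dt$, while $H_T^{(0)}=H_T$ by convention; thus Lemma \ref{p:reduction} is exactly the asserted formula, with $c_1=2\lambda_d^{-1}$ and $c_0=1$.

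For the inductive step, assume the formula holds for some $k$ with $1\le k\le d-2$. I would apply Lemma \ref{p:reduction} to the matrix $g^{(d-k)}$ (the residual right $\SO(d-k)$-ambiguity in $g^{(d-k)}$ being harmless, as $\cN_t$ is right-rotation invariant), which gives
$$\cN_t(g^{(d-k)})=2\lambda_{d-k}^{-1}P_t(g^{(d-k-1)})+H_t(g^{(d-k-1)},\lambda_{d-k},x^{(d-k-1)}).$$
Substituting this into $\int_0^T\cN_t(g^{(d-k)})(T^2-t^2)^{\tfrac{k-2}{2}}t\,dt$ splits it in two. The piece carrying $H_t$ is, straight from \eqref{e:HTj}, the term $H_T^{(k)}(g^{(d-k-1)},\lambda_{d-k},x^{(d-k-1)})$. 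For the piece carrying $P_t$, I would write $P_t(g^{(d-k-1)})=\int_0^t\cN_s(g^{(d-k-1)})(t^2-s^2)^{-1/2}s\,ds$, swap the order of integration (legitimate by Tonelli, all integrands being nonnegative), and evaluate the resulting inner integral over $t\in(s,T)$ via $w=t^2$ and then $w=s^2+(T^2-s^2)\tau$, turning it into a Beta integral:
$$\int_s^T(t^2-s^2)^{-1/2}(T^2-t^2)^{\tfrac{k-2}{2}}t\,dt=\frac{\Gamma(1/2)\Gamma(k/2)}{2\,\Gamma((k+1)/2)}(T^2-s^2)^{\tfrac{k-1}{2}}.$$
Since $\tfrac{k-1}{2}=\tfrac{(k+1)-2}{2}$, this $P_t$-piece equals a positive constant $\beta_k$ times $\int_0^T\cN_s(g^{(d-(k+1))})(T^2-s^2)^{\tfrac{(k+1)-2}{2}}s\,ds$, which is the $(k+1)$-level integral.

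Reassembling, $\cN_T(g)$ becomes $c_k\beta_k\lambda_{d-k}^{-1}\int_0^T\cN_t(g^{(d-(k+1))})(T^2-t^2)^{\tfrac{(k+1)-2}{2}}t\,dt$, plus $c_kH_T^{(k)}(g^{(d-k-1)},\lambda_{d-k},x^{(d-k-1)})$, plus the inherited sum $\sum_{j=0}^{k-1}c_jH_T^{(j)}(g^{(d-j-1)},\lambda_{d-j},x^{(d-j-1)})$; this is exactly the $(k+1)$-level formula, with $c_0,\dots,c_k$ unchanged and $c_{k+1}:=c_k\beta_k\lambda_{d-k}^{-1}$ adjoined. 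Each $c_j$ is a product of factors $\lambda_i^{-1}$ and absolute constants; since the $\lambda_i$ are the diagonal entries of the $A$-part in the Iwasawa decomposition $g=uak$, they, and hence the $c_j$, depend only on the class of $g$ in $U_d(\R)\bk\GL_d(\R)/\SO(d)$. The induction runs for $1\le k\le d-1$, the claimed range. I do not anticipate a genuine obstacle: this is a bookkeeping induction whose only computation is the elementary Beta-function evaluation above, with the sole point to check being that Tonelli applies --- which it does, since on $[0,T]$ the function $\cN_t\ge 0$ is bounded and the weights $(t^2-s^2)^{-1/2}$, $(T^2-t^2)^{\tfrac{k-2}{2}}$ are nonnegative and integrable for every $k\ge 1$.
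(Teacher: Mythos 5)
Your induction runs in the same way as the paper's: base case $k=1$ from Lemma \ref{p:reduction}, inductive step by applying Lemma \ref{p:reduction} to the innermost counting function $\cN_t(g^{(d-k)})$ appearing in the smoothing integral. The one place where you diverge is in handling the $P_t$-piece. The paper rewrites $P_t(\tilde g)$ as the finite lattice sum $\sum_{\norm{n\tilde g}<t}\sqrt{t^2-\norm{n\tilde g}^2}$, interchanges (finite) sum and integral, substitutes $u=\sqrt{t^2-\norm{n\tilde g}^2}$ to reduce each term to $\alpha_k(T^2-\norm{n\tilde g}^2)^{k/2}$, and then converts the resulting sum back into an integral against $(T^2-t^2)^{(k-2)/2}t\,dt$ by partial summation. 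You instead keep $P_t$ in its integral form, apply Tonelli, and evaluate the inner $t$-integral directly as the Beta integral $\tfrac{\Gamma(1/2)\Gamma(k/2)}{2\Gamma((k+1)/2)}(T^2-s^2)^{(k-1)/2}$. These are the same computation in slightly different clothing: your Beta evaluation is the continuous analogue of the paper's $u$-substitution plus the explicit constant $\alpha_k$, and your Tonelli swap replaces the paper's sum-integral interchange. Your route is perhaps marginally cleaner in that it avoids passing through lattice sums; the paper's route avoids justifying a swap of two genuine integrals (though, as you correctly note, Tonelli applies since everything is nonnegative and locally integrable, including when $k=1$ where both weights blow up like an inverse square root). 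Both correctly track the $\lambda_{d-k}^{-1}$ factor and the passage from $g^{(d-k)}$ to $g^{(d-k-1)}$, and your identification of the $H_t$-piece with $H_T^{(k)}$ via \eqref{e:HTj} is exact.
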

\begin{proof}
For $k=1$ this is just Lemma \ref{p:reduction}, so we may assume that $k>1$. By induction,
$$\cN_T(g)=c_{k-1}\int_0^T \cN_{t}(g^{(d-k+1)})(T^2-t^2)^{\tfrac{k-3}{2}}tdt+\sum_{j=0}^{k-2} c_j H_T^{(j)}(g^{(d-j-1)},\lambda_{d-j},x^{(d-j-1)}).$$
Writing 
$$g^{(d-k+1)}=\begin{pmatrix} g^{(d-k)} & \lambda_{d-k+1} x^{(d-k)}\\ 0 & \lambda_{d-k+1}\end{pmatrix}k^{(d-k+1)},$$
and applying Lemma \ref{p:reduction} to $ \cN_{t}(g^{(d-k+1)})$
we get that
\begin{eqnarray*}
\int_0^T \cN_{t}(g^{(d-k+1)})(T^2-t^2)^{\tfrac{k-3}{2}}tdt&=& 2\lambda_{d-k+1}^{-1}\int_0^TP_t(g^{(d-k)})(T^2-t^2)^{\tfrac{k-3}{2}}tdt\\
&&+ H_T^{(k-1)}(g^{(d-k)}, \lambda_{d-k+1} x^{(d-k)}).
\end{eqnarray*}
Now, for $\tilde{g}=g^{(d-k)}$ write 
$$P_t(\tilde g)=\sum_{\norm{n\tilde g}<t}\sqrt{t^2-\norm{n\tilde g}^2},$$ 
so that 
\begin{eqnarray*}
\int_0^TP_t(\tilde g)(T^2-t^2)^{\tfrac{k-3}{2}}tdt&=& \int_0^T\left(\sum_{\norm{n\tilde g}<t}\sqrt{t^2-\norm{n\tilde g}^2}\right)(T^2-t^2)^{\tfrac{k-3}{2}}tdt\\
&=&\sum_{\norm{n\tilde g}<T}\int_{\norm{n\tilde g}}^T\sqrt{t^2-\norm{n\tilde g}^2}(T^2-t^2)^{\tfrac{k-3}{2}}tdt\\
&=&\sum_{\norm{n\tilde g}<T}\int_{0}^{\sqrt{T^2-\norm{\tilde g}^2}}(T^2-\norm{n\tilde{g}}^2-u^2)^{\tfrac{k-3}{2}}u^2du\\
\end{eqnarray*}
where we made the change of variables $u=\sqrt{t^2-\norm{n\tilde{g}}^2}$. We can evaluate each of the integrals separately as
$$\int_{0}^{\sqrt{T^2-\norm{\tilde g n}^2}}(T^2-\norm{n\tilde{g}}^2-u^2)^{\tfrac{k-3}{2}}u^2du=\alpha_{k} (T^2-\norm{n\tilde{g}}^2)^{k/2},$$
with 
$\alpha_k=\int_{0}^{1}(1-u^2)^{\tfrac{k-3}{2}}u^2du$,
to get that
 \begin{eqnarray*}
\int_0^TP_t(\tilde g)(T^2-t^2)^{\tfrac{k-3}{2}}tdt&=&\alpha_{k} \sum_{\norm{n\tilde g}<T}(T^2-\norm{n\tilde{g}}^2)^{k/2}\\
&=& \alpha_{k}k\int_0^T \cN_{t}(\tilde{g})(T^2-t^2)^{\frac{k-2}{2}}tdt.
\end{eqnarray*}
Plugging this back in we get that indeed 
\begin{eqnarray*}\cN_T(g)&=&c_k\int_0^T\cN_{t}(g^{(d-k)})(T^2-t^2)^{\tfrac{k-2}{2}}tdt\\
&&+\sum_{j=0}^{k-1} c_j H_T^{(j)}(g^{(d-j-1)},\lambda_{d-j},x^{(d-j-1)}).
\end{eqnarray*}
with $c_k=2k\lambda_{d-k+1}^{-1} \alpha_{k}c_{k-1}$.
\end{proof}

\begin{rem}
From the proof we see that the constants $c_k(g)$ depend only on the class of $g$ in $U_d(\R)\bk \GL_d(\R)/\SO(d)$ and are proportional to $(\lambda_{d-k+1}\cdots\lambda_d)^{-1}$. In fact, by comparing the main term in the asymptotics as $T\to \infty$ we get that
$$c_k(g)=\frac{\vol(B_1^{(d)})|\det(g^{(d-k)})|}{\vol(B_1^{(d-k)})|\det(g)| \int_0^1(1-t^2)^{\frac{k-2}{2}}  t^{d-k+1}dt}$$
however, we will not need to use this information. 
\end{rem}

\section{Proof of main results}
We now have all the ingredients needed for the proofs.
\begin{proof}[Proof of Theorem 1]
Apply the inductive formula for $\cN_T(g)$ with $k=d-1$ to get that 
$$\cN_T(g)=c_{d-1}(g)\int_0^T \cN_{t}(g^{(1)})(T^2-t^2)^{\tfrac{d-3}{2}}tdt+\sum_{j=0}^{d-2} c_j(g) H_T^{(j)}(g^{(d-j-1)},\lambda_{d-j},x^{(d-j-1)}),$$
where $c_j=c_j(g)$ are as in Proposition \ref{p:inductive}.
Since 
$\cN_{t}(g^{(1)})=\sum_{|\lambda_1 n|<t}1$, after exchanging the order of summation and integration we can rewrite the  integral in the first term as
\begin{eqnarray*}\int_0^T \cN_{t}(g^{(1)})(T^2-t^2)^{\tfrac{d-3}{2}}tdt&=& \sum_{|\lambda_1 n|<T}\int_{|\lambda n|}^T(T^2-t^2)^{\tfrac{d-3}{2}}tdt\\
&=& \frac{\lambda_1^{d-1}}{d-1}\sum_{| n|<\frac{T}{\lambda_1}}((\tfrac{T}{\lambda_1})^2- n^2)^{\tfrac{d-1}{2}}\\
&=&\left(\tfrac{2\int_0^1(1-t^2)^{\frac{d-1}{2}}dt }{\lambda_1(d-1)}\right)T^d+O((\lambda_1T)^{\frac{d-1}{2}})\\
\end{eqnarray*}
where we used Lemma \ref{l:sum2int} for the last estimate. 
Plugging this back and comparing the main terms in the asymptotic as $T\to \infty$ we see that 
$$\cN_T(g)=\frac{\vol(B_T^{(d)})}{|\det(g)|}+\sum_{j=0}^{d-2} c_j(g) H_T^{(j)}(g^{(d-j-1)},\lambda_{d-j},x^{(d-j-1)}) +O((\lambda_1T)^{\frac{d-1}{2}}),$$
and after subtracting the main term and squaring we get for any $u\in U_d(\Z)\bk U_d(\R)$
$$|\cR_{T}(ug)|^2 \ll (\lambda_1T)^{d-1}+ \sum_{j=1}^{d-1}|c_{j-1}(g)|^2|H_T^{(j-1)}(u_{d-j}g^{(d-j)},\lambda_{d-j+1},x^{(d-j)})|^2,$$
where $u_{l}\in U_l(\Z)\bk U_l(\R)$ is the top left $l\times l$ corner of $u$. 
Integrating over $U_d(\Z)\bk U_d(\R)\cong (\R/\Z)^{\frac{d(d-1)}{2}}$ is the same as integrating over $x^{(j)}\in (\R/\Z)^j$ for $j=1,\ldots, d-1$ and we can use Propositions  \ref{p:meansquare} and \ref{p:meansquare2} to bound
\begin{eqnarray*}
 \lefteqn{\int_{U_{d-1}(\Z)\bk U_{d-1}(\R)} \int_{(\R/\Z)^{d-1}}|H_T(ug^{(d-1)},\lambda_{d},x^{(d-1)})|^2dx^{(d-1)}du}\\
 &&\ll \int_{U_{d-1}(\Z)\bk U_{d-1}(\R)} \cN_{T}(ug^{(d-1)})\log^2(\cN_{T}(ug^{(n-1)}))du\end{eqnarray*}
 and
 \begin{eqnarray*}
\lefteqn{\int_{U_{d-j}(\Z)\bk U_{d-j}(\R)}\int_{(\R/\Z)^{d-j}}|H_T^{(j-1)}(ug^{(d-j)},\lambda_{d-j+1},x^{(d-j)})|^2dx^{(d-j)}du}\\
&& \ll\int_{U_{d-j}(\Z)\bk U_{d-j}(\R)}\cN_{T}(ug^{d-j})du.
\end{eqnarray*}

We thus get that 
\begin{eqnarray*}
\int_{U_d(\Z)\bk U_d(\R)}|\cR_{T}(ug)|^2du&\ll& (\lambda_1T)^{d-1}\\
&&+\int_{U_{d-1}(\Z)\bk U_{d-1}(\R)} \cN_{T}(ug^{(d-1)})\log^2(\cN_{T}(ug^{(n-1)}))du\\
&&+\sum_{j=2}^{d-1}|c_{j-1}(g)|^2\lambda_{d-j+1}^jT^j\int_{U_{d-j}(\Z)\bk U_{d-j}(\R)}\cN_{T}(ug^{d-j})du.\end{eqnarray*}
Finally, applying the trivial estimate $\cN_{T}(g^{(k)})=O_g(T^k)$ and noting that the implied constant here depends only on the class of $g$ in $U_d(\R)\bk \GL_d(\R)/\SO(d)$, we get that 
$$\int_{U_d(\Z)\bk U_d(\R)}|\cR_{T}(ug)|^2du\ll_g T^{d-1}\log^2(T),$$
as claimed.

\end{proof}

\begin{rem}\label{r:Sharp2}
Since the average of the oscillatory sums is small, by averaging the inductive formula we get
$$\int_{U_d(\Z)\bk U_d(\R)}\cN_T(ug)du=\frac{\lambda_1^{d-1}c_{d-1}(g)}{d-1}\sum_{| n|<\frac{T}{\lambda_1}}((\tfrac{T}{\lambda_1})^2- n^2)^{\frac{d-1}{2}}+O(T^{\frac{d-2}{2}}).$$
Using Lemma \ref{l:sum2int} and subtracting the main term we get that
$$\int_{U_d(\Z)\bk U_d(\R)}\cR_T(ug)du\ll O(T^{\frac{d-1}{2}}),$$
and that this bound is sharp if and only if the estimate
in Lemma \ref{l:sum2int}
is also sharp. In particular, this holds when $d=2,3,5$, (see remark \ref{r:Sharp1}).
\end{rem}

\begin{proof}[Proof of Theorem \ref{t:refine}]
Let $l\leq \frac{d}{2}$ and use the inductive formula in Proposition \ref{p:inductive} with $k=d-l$ to get that for any $g\in GL_d(\R)^+$
$$\cN_T(g)=c_{d-l}(g)\int_0^T \cN_{t}(g^{(l)})(T^2-t^2)^{\tfrac{d-l-2}{2}}tdt+\sum_{j=0}^{d-l-1} c_j(g) H_T^{(j)}(g^{(d-i-1)},\lambda_{d-j},x^{(d-j-1)}).$$
Now, let $\Lambda=\Z^lg^{(l)}\subseteq \R^l$, since $k=d-l> l-1$ we can use Lemma \ref{l:Poisson1} to get that
\begin{eqnarray*}
\int_0^T \cN_{t}(g^{(l)})(T^2-t^2)^{\tfrac{d-l-2}{2}}tdt
&=&\frac{1}{d-3}\mathop{\sum_{v\in \Lambda}}_{\|v\|<T}(T^2-\|v\|^2)^{\frac{d-l}{2}}\\
&=& c_{\Lambda}T^d+O_{\Lambda}(T^{\frac{d-1}{2}}).
\end{eqnarray*}
Comparing the main terms in the asymptotics as before and subtracting it from both sides we see that
$$\cR_T(g)=\sum_{j=0}^{d-l-1} c_j(g) H_T^{(j)}(g^{(d-j-1)},\lambda_{d-j},x^{(d-j-1)})+O_g((T^{\frac{d-1}{2}}).$$
Since there is no dependence on the right on $x^{(1)},x^{(2)},\ldots, x^{(l-1)}$, squaring and integrating over $U_{d,l}$ give the same bound as in Theorem \ref{t:main}.
\end{proof}

\begin{proof}[Proof of Corollary \ref{c:main}]
The Haar measure of $\SL_d(\R)$ in the coordinates $g=uak$, is given by $dg=\Delta(a)dudadk$ where $du,da$ and $dk$ denote the Haar measures of $U,A$ and $K$ respectively, and $\Delta(a)$ is the modular function of $UA$ (which is continuous).
Any compact set $\cC$ in $\SL_d(\R)$ is contained in a larger compact set of the form 
$\cC'=\{uak|u\in U(\Z)\bk U(\R),\; a\in \cC_A,\; k\in K\}$ with $\cC_A$ a compact set in $A$.  Corollary \ref{c:main} now immediately follows from Theorem \ref{t:main}.
\end{proof}

\begin{proof}[Proof of Corollary \ref{c:BC}]
Fix a sequence $T_k$ with $T_k\geq q^k$ for some $q>1$. Let $\cC\subseteq X_d$ denote a compact set, and for any $k\in \N$ let 
$$A_k=\{\Lambda\in \cC : |\cR_{T_k}(\Lambda)|\geq T_k^{\frac{d-1}{2}}\log^2(T_k)\}.$$ 
Corollary \ref{c:main} implies that there is a constant $c$ (depending on $\cC$) such that $\mu(A_k)\leq \frac{c}{\log^2(T_k)}$, and hence the series 
$\sum_{k}\mu(A_k)<\infty$ converges. By the Borel-Cantelli Lemma we get that $\mu(\cap_{n\in \N}\cup_{k\geq n} A_k)=0$, hence, $\cR_{B_{T_k}}(\Lambda)=O_{\Lambda}(T_k^{\frac{d-1}{2}}\log^2T_k)$ for $\mu$-a.e $\Lambda\in \cC$. Since this is true for any compact set $\cC$, the same holds for $\mu$-a.e. $\Lambda\in X_d$.
\end{proof}

%
\end{document}